\newtheorem{theorem}{Theorem}[section]
\newtheorem{lemma}[theorem]{Lemma}
\newtheorem{definition}[theorem]{Definition}
\newtheorem{corollary}[theorem]{Corollary}
\theoremstyle{remark}
\newtheorem{example}[theorem]{Example}
\newtheorem{remark}[theorem]{Remark}
\newcommand{\R}{\mathbb R}
\begin{document}

\title{The Geometric Structure of Max-Plus Hemispaces}

\author[de]{Daniel Ehrmann}
\ead{daniel.ehrmann@spartans.ut.edu}

\author[zh]{Zach Higgins}
\ead{zhiggins11@yahoo.com}

\author[rvt1]{Viorel Nitica}
\ead{vnitica@wcupa.edu}


\address[de]{Department of Mathematics, University of Tampa, 401 West Kennedy Blvd., Campus Box 2928, Tampa, FL 33606}

\address[zh]{Department of Mathematics, University of Florida, 358 Little Hall, PO Box 118105, Gainesville, FL 32611}

\address[rvt1]{Department of Mathematics, West Chester University,
PA 19383, USA, and Institute of Mathematics, P.O. Box 1-764,
Bucharest, Romania}

\begin{abstract} Given a set $S$ endowed with a convexity structure, a hemispace is a convex subset of $S$ which has convex complement. We recall that $\R^n_{\max}$ is a semimodule over the max-plus semifield $(\R_{\max}:=\R\cup\{-\infty\},\max,+).$ A convexity structure of current interest is provided by $\R^n_{\max}$  naturally endowed with the max-plus (or tropical) convexity. In this paper we provide a geometric description of a max-plus hemispace in $\R^n_{\max}$. We show that a max-plus hemispace has a conical decomposition as a finite union of disjoint max-plus cones. These cones can be interpreted as faces of several max-plus hyperplanes. Briec-Horvath proved that the closure of a max-plus hemispace is bounded by a max-plus hyperplane. Given a hyperplane, we give a simple condition for the assignment of the faces between a pair of complementary max-plus hemispaces. Our result allows for counting and enumeration of the associated max-plus hemispaces. We recall that an $n$-dimensional max-plus hyperplane is called strictly affine and nondegenerate if it has a linear equation that contains all variables $x_1,x_2,\dots,x_n$ and a free term. We prove that the number of max-plus hemispaces in $\R^n_{\max}$, supported by strictly affine nondegenerate hyperplanes centered in the origin, is twice the $n$-th ordered Bell number. Our work can be viewed as a complement to the recent results of Katz-Nitica-Sergeev, who described generating sets for max-plus hemispaces, and the results of Briec-Horvath, who proved that closed/open max-plus hemispaces are max-plus closed/open halfspaces.
\end{abstract}

\begin{keyword}
convex structure, tropical convexity, max-plus convexity, abstract convexity,
max-plus algebra, max-plus hemispace, max-plus semispace, max-plus halfspace, max-plus hyperplane, ordered Bell numbers, max-plus cone, conical decomposition \vskip0.1cm {\it{AMS Classification:}} 15A80, 52A01, 16Y60.
\end{keyword}

\date{}
\maketitle

\section{Introduction}

The \emph{max-plus semifield} is the set $\R_{\max} = \R \cup \{-\infty\}$ endowed with the operations $\oplus =$ max, $\otimes = +$. The zero element for $\oplus$ is $-\infty$ and the identity for $\otimes$ is $0$.

The \emph{max-plus semimodule} is the set $\R^n_{\max}$ endowed with the operations of addition and scalar multiplication given by
\begin{equation*}
\begin{gathered}
x \oplus y = (x_1 \oplus y_1,...,x_n \oplus y_n)\\
\alpha \otimes x = \alpha x=(\alpha x_1,...,\alpha x_n),
\end{gathered}
\end{equation*}
where $x=(x_1,\dots,x_n),y=(y_1,\dots, y_n)\in \R^n_{\max}$ and $\alpha\in \R_{\max}$. In order to simplify the notation, we denote the element $(-\infty,-\infty,\dots,-\infty)\in \R^n_{\max}$ by $-\infty$ as well.

For a positive integer $n$, we introduce the notation $[n]=\{1,2,\dots,n\}$.

One may introduce on $\R_{\max}^n$ the topology induced by the metric $$d_{\infty}(x,y)=\max_{i\in [n]} |e^{x_i}-e^{y_i}|, x,y\in \R_{\max}^n.$$

The max-plus semimodule has a natural convex structure, called \emph{max-plus convexity} or \emph{tropical convexity}, which is of strong current interest as a mathematical object in itself, but also due to a wide range of applications in algebraic geometry, optimization, control theory, economics, computer sciences, biology, finance and many other fields of contemporary scientific research. We refer to~\cite{Zim-77, DS, BCOQ, GK-06} and to the references mentioned there for the details about this topic that are beyond the scope of the paper.

A natural and convenient way to introduce max-plus convexity is by the aim of max-plus segments~\cite{Zim-77, NS1}.

\begin{definition}
The \emph{(max-plus) segment} joining the points $x,y \in \R^n_{\max}$ is the set:
\begin{equation}
\begin{aligned}
\ [x,y]&=\{\alpha x \oplus \beta x|\alpha, \beta\in \R_{\max}, \alpha \oplus \beta =0\} \\
&= \{\max(\alpha +x, \beta +y)|\alpha, \beta\in \R_{\max}, \max(\alpha, \beta)=0\}.
\end{aligned}
\end{equation}
\end{definition}

\begin{definition}
A subset $S \subseteq \R^n_{\max}$ is said to be \emph{(max-plus) convex} if $[x,y] \subseteq S$ for all $x,y \in S$.
\end{definition}

We recall several classes of convex sets that will be used in the sequel.

\begin{definition} A subset $S \subseteq \R^n_{\max}$ is a \emph{(max-plus) semispace} at $z \in \R^n_{\max}$ if $S$ is a maximal convex subset of $\R^n_{\max}$ avoiding $z$.
\end{definition}

\begin{remark} Semispaces are introduced in~\cite{NS1, NS2}. It is shown in~\cite{NS1} that semispaces form an intersectional basis for the collection of convex sets.
\end{remark}

\begin{definition} A set $C \subseteq \R_{max}^n$ is called a \emph{(max-plus) cone} if it is closed under addition and multiplication by scalars different from $-\infty$.
\end{definition}

\begin{remark} Our notion of cone is a bit different then the usual one, as we do not allow multiplication by the scalar $-\infty$. We refer to Butkovic-Schneider-Sergeev~\cite{BSS} for an introduction to the usual (max-plus) cones.
\end{remark}

\begin{definition}
A \emph{(max-plus) closed halfspace} is the set of solutions of a max-plus linear inequality:
\begin{equation}\label{e:halfs1}
\left\{ x\in\R_{\max}^n \mid  \bigoplus_{i\in I} \beta_ix_i\oplus\delta \leq\bigoplus_{j\in J} \gamma_jx_j\oplus \alpha \makebox{ and }
x_\ell=-\infty \makebox{ for } \ell\in L \right\},
\end{equation}
where $I$, $J$ and $L$ are pairwise disjoint subsets of $[n]$, $\alpha, \delta\in \R_{\max},\beta_i, \gamma_i\in \R$, and at most one of $\alpha,\delta$ is different from $-\infty$. We call the \emph{boundary of the closed halfspace} the subset of the halfspace for which one has equality in~\eqref{e:halfs1}.
\end{definition}

\begin{remark} Note that the union $I\cup J\cup L$ can be a proper subset of $[n]$.
\end{remark}

\begin{remark} It is well known, see e.g.~\cite{BH, NS3}, that up to a finite translation and a permutation of the variables $x_i$, the boundary of a closed halfspace that contains $-\infty$ has the equation:
\begin{equation}\label{e:-boundaryhalfs1}
\left\{ x\in\R_{\max}^n \mid \bigoplus_{i\in I}x_i=\bigoplus_{j\in J}x_j\oplus \alpha \makebox{ and }
x_\ell=-\infty \makebox{ for } \ell\in L \right\},
\end{equation}
where $I, J, L$ are disjoint subsets of $[n], I\not =\emptyset, J,L$ can be empty, and $\alpha=0$ or missing, in which case $J\not = \emptyset$. Thus the boundary of a halfspace is described by a max-plus linear equation, and hence it is a \emph{(max-plus) hyperplane}. As any hyperplane is a convex set, the boundary is a convex set.
\end{remark}

We recall some terminology introduced in~\cite{NS3}.

\begin{definition} A hyperplane $\mathcal{H}$ given by~\eqref{e:-boundaryhalfs1} is called \emph{strictly affine} if $\alpha=0$ and it is called \emph{nondegenerate} if $L=\emptyset$. If $\mathcal{H}$ is strictly affine and nondegenerate, we call the origin in $\R^n_{\max}$ the \emph{center} of $\mathcal{H}$. Equivalently, $\mathcal{H}$ is said to be \emph{centered in the origin}.
\end{definition}

\begin{definition} A set $H \subseteq \R^n_{\max}$ is a \emph{(max-plus) hemispace} in $\R^n_{\max}$ if both $H$ and its complement $\complement H$ are convex. If $H$ is a hemispace, we call $(H, \complement H)$ a \emph{pair of complementary hemispaces}.
\end{definition}

Hemispaces also appear in the literature under the name of halfspaces, convex halfspaces,
and generalized halfspaces. Usual hemispaces in the linear space $\R^n$ are described by Lassak in~\cite{Lassak-84}.
Mart\'{\i}nez-Legaz and Singer~\cite{MLegSin-84} give several geometric characterizations of usual hemispaces in $\R^n$
with the aid of linear
operators and lexicographic order in $\R^n$. Hemispaces play an important role in abstract convexity (see Singer~\cite{Sin:97}, Van de Vel~\cite{VdV}),
where they are used in the Kakutani Theorem to separate
two convex sets from each other. The proof of the Kakutani Theorem makes use of
Zorn's Lemma (relying on the Pasch axiom, which holds both in tropical~\cite{Zim-77} and usual convexity). As general convex sets in a convexity structure can be quite complicated, a clear description of the hemispaces is highly desirable. Hemispaces can also be used in the investigation of more complex convex sets, such as (max-plus) polyhedra, and convex or polyhedral decompositions in multiple pieces.

In this paper we determine the geometric structure of hemispaces in $\R^n_{\max}$. Our work can be viewed as a complement to the results of Briec-Horvath~\cite{BH}, who proved that closed/open hemispaces are closed/open halfspaces, and to those of Katz-Nitica-Sergeev~\cite{KNS}, who described generating sets for hemispaces. The approach here is more elementary, with combinatorial and geometric flavor. In particular, we obtain a conical decomposition of a hemispace, see Theorem~\ref{t:main}, as a finite union of disjoint cones. Our proofs are completely independent of~\cite{KNS}, but assume~\cite{BH} as the starting point of the investigation.

Briec-Horvath~\cite{BH} show that the closure of a hemispace $H$ that contains $-\infty$ is a closed halfspace given by ~\eqref{e:halfs1}. We will refer to $\mathcal{H}$, the boundary of the halfspace, as the \emph{bounding hyperplane}, or the \emph{boundary} of the halfspace, and we will refer to  $(H,\complement H)$ as a \emph{complementary pair of hemispaces related to $\mathcal{H}$}. To simplify the notation, we assume in the future that the hyperplane is described by~\eqref{e:-boundaryhalfs1}.

The hemispaces in a complementary pair have both nonempty interior if and only if $L=\emptyset$. If so, then the interiors of a pair $(H_1,H_2), -\infty\in H_1,$ of complementary hemispaces bounded by $\mathcal{H}$ are given by
\begin{eqnarray}
\text{int}(H_1)=\left\{ x\in\R_{\max}^n \mid \bigoplus_{i\in I}x_i<\bigoplus_{j\in J}x_j\oplus \alpha \right\},\label{eq:line1}\\
\text{int}(H_2)=\left\{ x\in\R_{\max}^n \mid \bigoplus_{i\in I}x_i>\bigoplus_{j\in J}x_j\oplus \alpha \right\}.\label{eq:line2}
\end{eqnarray}

If $L\not =\emptyset$, due to the continuity, we still have the inclusions:
\begin{eqnarray}
\left\{ x\in\R_{\max}^n \mid \bigoplus_{i\in I}x_i<\bigoplus_{j\in J}x_j\oplus \alpha \makebox{ and }
x_\ell=-\infty \makebox{ for } \ell\in L  \right\}\subseteq H_1,\label{eq:line1-infinity}\\
\left\{ x\in\R_{\max}^n \mid \bigoplus_{i\in I}x_i>\bigoplus_{j\in J}x_j\oplus \alpha  \makebox{ and }
x_\ell=-\infty \makebox{ for } \ell\in L  \right\}\subseteq H_2.\label{eq:line2-infinity}
\end{eqnarray}
By abuse of language, we continue to call the left hand sides in~\eqref{eq:line1-infinity},~\eqref{eq:line2-infinity} the \emph{interiors} of $H_1,H_2$.

The boundary of a hemispace $H$ is characterized by the linear equation~\eqref{e:-boundaryhalfs1} and it is a convex set. As the intersection of two convex sets is a convex set, both $H$ and $\complement H$ intersect the boundary of the halfspace in a convex set. Therefore, due to~\cite{BH}, in order to understand the geometric structure of a hemispace it is necessary to understand how to partition the boundary of a halfspace in two convex sets and then how to assign the pieces of the boundary to the interiors in order to complete a pair of complementary hemispaces.

Assume now that a hyperplane~\eqref{e:-boundaryhalfs1} bounds two complementary hemispaces. A first observation, discussed in Section~\ref{s:2}, is that the hyperplane has a \emph{conical decomposition in faces} (cones) of various dimensions. In Section~\ref{s:3} we prove two fundamental facts about the faces:

\begin{itemize}
\item if a face has a common point with a hemispace, then the whole face is included in the hemispace (Lemma~\ref{first-lemma});
\item if two different faces are included in a hemispace, then any segment joining those faces is included and intersects the union of the faces and their common boundary (Lemma~\ref{l:lemma2}).
\end{itemize}

We observe that the notion of face that we use can be extended to include the notion of \emph{(max-plus) sector}, related to the complement of a semispace, that appeared before in the max-plus literature~\cite{DS,J,NS1, NS2, NS3, KNS}. For a fixed supporting hyperplane, the conical decomposition of the boundary extends to a conical decomposition of the whole space $\R^n_{max}$. The main result, Theorem~\ref{t:main} presented in Section~\ref{s:main}, basically says that a pair of complementary hemispaces is given by a partition of the faces associated to a hyperplane in two collections, both satisfying a simple condition of closure. This is a new combinatorial result that was not observed in the previous work~\cite{BH, KNS}. Our result allows for an explicit counting and enumeration of hemispaces. In Section~\ref{s:counting} we count the number of hemispaces supported by hyperplanes centered in the origin which are strictly affine and nondegenerate, that is, for which the linear equation contains all variable and a free term. Theorem~\ref{t:counting773} shows that the number of such hemispaces is twice the $n$-th ordered Bell number. Finally, in Section~\ref{s:examples} we enumerate all hemispaces counted in Theorem~\ref{t:counting773} for $n=2$ and $n=3$ and show one more example of hemispace, supported by a hyperplane that is neither strictly affine nor degenerate.

\section{The combinatorial structure of the boundary}\label{s:2}

Without loss of generality, we can assume that the sets $I,J,L$ in~\eqref{e:-boundaryhalfs1} are ordered and the indices written in increasing order, that is, there exists $1\le m\le p\le q\le n$ such that $I=(1,\dots,m), J=(m+1,\dots,p), L=(p+1,\dots,q)$. If $\alpha=0$ appears in~\eqref{e:-boundaryhalfs1}, we denote $x_{n+1}:=0$ and denote $\bar J=(m+1,\dots,p,n+1).$ Otherwise denote $\bar J=J$.

It is clear that the equality may occur in~\eqref{e:-boundaryhalfs1} only when some terms on the left side of the linear equation are equal to some terms on the right side, and the rest of the terms in the linear equation are strictly smaller. Fixing the sets of indices for the equal terms naturally leads to the notion of "face". Nevertheless, it is convenient for our presentation to consider a more general notion of face, in which the maximal terms are among those appearing on a single side of~\eqref{e:-boundaryhalfs1}.

\begin{definition}\label{d:faces23}
Assume that a hyperplane $\mathcal{H}\subseteq \R^n_{\max}$ is defined by~\eqref{e:-boundaryhalfs1}. Let $0\le k\le n$. A \emph{$k$-codimensional face $F$ associated to $\mathcal{H}$} is a subset $F=F(I_F, \bar J_F)\subseteq \R^n_{\max}$ defined by two subsets $I_F\subseteq I, \bar J_F\subseteq \bar J$, where at least one of $I_F, \bar J_F$ is nonempty, $I_F\cup \bar J_F$ has $k+1$ elements and such that a point $x\in \R^n_{\max}$ belongs to $F$ if and only if:
\begin{equation}\label{eq:face-def}
\left \{
\begin{gathered}
x_{i_1}=x_{i_2}>-\infty, i_1,i_2\in I_F\cup \bar J_F,\\
x_k<x_i, i\in I_F\cup \bar J_F, k\in (I\cup \bar J)\setminus (I_F\cup \bar J_F),\\
x_{\ell}=-\infty, \ell\in L.
\end{gathered}
\right .
\end{equation}

A $k$-codimensional face of $\R^n_{\max}$  will be referred to as a \emph{$k$-face}. We denote $K_F=I_F\cup \bar J_F$ and call it the \emph{set of indices} of $F$. If $I_F\not =\emptyset, \bar J_F\not =\emptyset,$ then the face $F$ is also called \emph{pure face}.

In addition, if $\alpha$ is missing from~\eqref{e:-boundaryhalfs1}, we consider the extra face of type I characterized by:
\begin{equation}\label{eq:face-def2}
x_i=-\infty, i\in I\cup J\cup L,
\end{equation}
and if $L\not =\emptyset$ we consider the extra face of type II characterized by:
\begin{equation}\label{eq:face-def3}
x_i>-\infty, \text{ for some } i\in L.
\end{equation}

If  $\alpha$ is missing from~\eqref{e:-boundaryhalfs1} and $L\not =\emptyset$ we consider both extra faces~\eqref{eq:face-def2} and~\eqref{eq:face-def3}.

\end{definition}

The following lemma shows several properties of the collection of faces associated to a hyperplane and can be easily proved by inspection.

\begin{lemma}\label{l:somlem34} Assume that a hyperplane $\mathcal{H}\subseteq \R^n_{\max}$ is defined by~\eqref{e:-boundaryhalfs1}.
\begin{enumerate}
\item The collection of the sets of indices of the $k$-faces associated to $\mathcal{H}$ is closed under union.
\item Any two distinct faces associated to $\mathcal{H}$ are disjoint.
\item The union of all faces associated to $\mathcal{H}$ is $\R^n_{\max}$.
\end{enumerate}
\end{lemma}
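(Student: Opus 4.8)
\emph{The plan} is to reduce all three parts to a single structural fact about ordinary faces and then dispose of the two extra faces by inspecting sign patterns. Throughout I use the convention $x_{n+1}=0$ when $\alpha$ is present. The fact I would record first is a \emph{recovery principle}: if $x\in\R^n_{\max}$ satisfies $x_\ell=-\infty$ for all $\ell\in L$ and $m:=\max_{i\in I\cup\bar J}x_i$ is finite, then $x$ lies in the ordinary face $F(I_F,\bar J_F)$ of~\eqref{eq:face-def} obtained by taking $K_F=I_F\cup\bar J_F$ to be the maximizer set $\{i\in I\cup\bar J:x_i=m\}$, and this $K_F$ is read off from $x$ as exactly that maximizer set. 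In other words, on this locus the index set of the face is completely determined by $x$. For part (1), since $I\cap\bar J=\emptyset$, the map $K\mapsto(K\cap I,K\cap\bar J)$ identifies the collection of index sets of the $k$-faces with the collection of \emph{all} nonempty subsets of $I\cup\bar J$; closure under union is then immediate, because the union of two nonempty subsets of $I\cup\bar J$ is again one.

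For part (2), the recovery principle settles the ordinary faces at once: a common point $x$ of $F(I_F,\bar J_F)$ and $F(I_{F'},\bar J_{F'})$ would force both $K_F$ and $K_{F'}$ to equal the maximizer set of $x$, so the two faces coincide. The extra faces are then separated by their defining conditions: an ordinary face forces $x_\ell=-\infty$ on $L$ together with a finite common value on the nonempty set $K_F$, the type~II face~\eqref{eq:face-def3} demands $x_\ell>-\infty$ for some $\ell\in L$, and the type~I face~\eqref{eq:face-def2} demands $x_i=-\infty$ for every $i\in I\cup J\cup L$. I would check that these three requirements are pairwise incompatible, which yields disjointness in all remaining cases.

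For part (3), I would take an arbitrary $x\in\R^n_{\max}$ and branch on the $L$-coordinates. If some $x_\ell>-\infty$ with $\ell\in L$, then $L\neq\emptyset$, so the type~II face exists and contains $x$. Otherwise all $L$-coordinates are $-\infty$, and I would branch on whether $m=\max_{i\in I\cup\bar J}x_i$ is finite: if it is, the recovery principle places $x$ in the ordinary face indexed by its maximizer set; if $m=-\infty$, then every relevant coordinate vanishes and $x$ lies in the type~I face. \emph{The main point to get right} is the bookkeeping of the four regimes coming from ($\alpha$ present or missing) $\times$ ($L=\emptyset$ or $L\neq\emptyset$): one must verify that the extra faces are declared available in precisely the regimes needed (type~II exactly when $L\neq\emptyset$, type~I exactly when $\alpha$ is missing), and that $\alpha$ present forces $m\geq x_{n+1}=0>-\infty$, so the $m=-\infty$ branch can occur only when $\alpha$ is missing. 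Once this case table is laid out, every point falls into some face, the faces cover $\R^n_{\max}$, and part (2) guarantees the cover is in fact a partition.
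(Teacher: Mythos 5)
Your proof is correct; the paper itself offers no argument for this lemma beyond the remark that it ``can be easily proved by inspection,'' and your recovery principle (the index set of an ordinary face is exactly the maximizer set of $x$ over $I\cup\bar J$, which is therefore determined by $x$) together with the case table on ($\alpha$ present or missing) $\times$ ($L$ empty or nonempty) is precisely the inspection intended. You also handle the one delicate point correctly: when $\alpha$ is present the maximum over $I\cup\bar J$ is at least $x_{n+1}=0>-\infty$, so the type~I face is only ever needed (and is only declared to exist) when $\alpha$ is missing.
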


\begin{remark} a) We observe that given a hyperplane $\mathcal{H}$ defined by~\eqref{e:-boundaryhalfs1}, the set of indices $K_F$ of a $k$-face $F$ uniquely determines $F$. If the linear equation of a hyperplane in $R^n_{\max}$ contains all variables $x_1, \dots, x_n, \alpha=0,$ and consequently $L=\emptyset,$ then the collection of faces related to $\mathcal{H}$, which are all $k$-faces, is indexed by $\mathcal{P}(n+1)\setminus \{\emptyset\},$ where $\mathcal{P}(n+1)$ is the collection of subsets of $[n+1]$.

b) The $k$-faces of minimal codimension appeared before in the literature under the name of sectors. They are related to the complements of semispaces~\cite{DS,J,NS1, NS2, NS3, KNS}.

c) It is easy to check that the faces are closed under (max-plus) addition and scalar multiplication by scalars different from $-\infty$, and therefore are also max-plus cones. It follows from Lemma~\ref{l:somlem34} that the collection of faces associated to a hyperplane gives a (max-plus) conical decomposition of $\R^n_{\max}$. We show in our main result that this decomposition is basically preserved by a partition of $\R^n_{\max}$ into a pair of complementary hemispaces. The only face that may be splitted further is the face of type I.

d) The pure faces are subsets of the hyperplane~\eqref{e:-boundaryhalfs1}. The faces that are not pure faces are included in the interiors of the halfspaces determined by~\eqref{e:-boundaryhalfs1}.
\end{remark}

We show now several examples of decompositions of $\R^n_{\max}$ in faces.

\begin{example}\label{e:example1} The faces in $\R^2_{\max}$ associated to the hyperplane $x_1\oplus x_2=0$, which give a conical decomposition of $\R^2_{\max}$, are characterized by:
\begin{enumerate}
\item $0$-codimensional faces, corresponding respectively to the sets of indices $\{1\},\{2\},\{3\}$:
\begin{enumerate}
\item $x_1>x_2, x_1>0$;
\item $x_2>x_1,x_2>0$;
\item $0>x_1,0>x_2$.
\end{enumerate}
\item $1$-codimensional faces, corresponding respectively to the sets of indices $\{1,2\},\{2,3\},\{1,3\}$:
\begin{enumerate}
\item $x_1=x_2>0$;
\item $x_2=0>x_1$; (pure)
\item $x_1=0>x_2$. (pure)
\end{enumerate}
\item $2$-codimensional face, corresponding to the set of indices $\{1,2,3\}$:
\begin{enumerate}
\item $x_1=x_2=0$. (pure)
\end{enumerate}
\end{enumerate}
\end{example}

\begin{example} The faces in $\R^3_{\max}$ associated to the hyperplane $x_1\oplus x_2=0$, which give a conical decomposition of $\R^3_{\max}$, are exactly those listed in Example~\ref{e:example1}. Note that the variable $x_3$ does not appear in the description of the faces.
\end{example}

\begin{example} The faces in $\R^2_{\max}$ associated to the hyperplane $x_1= x_2$, which give a conical decomposition of $\R^2_{\max}$, are characterized by:
\begin{enumerate}
\item $0$-codimensional faces, corresponding respectively to the sets of indices $\{1\},\{2\}$:
\begin{enumerate}
\item $x_1>x_2$;
\item $x_2>x_1$.
\end{enumerate}
\item $1$-codimensional face, corresponding respectively to the set of indices $\{1,2\}$:
\begin{enumerate}
\item $x_1=x_2>-\infty$.
\end{enumerate}
\item an extra face of type I due to the free term missing from the linear equation of the hyperplane:
\begin{enumerate}
\item $x_1=x_2=-\infty$.
\end{enumerate}
\end{enumerate}
\end{example}

\begin{example} The faces in $\R^2_{\max}$ associated to the hyperplane $x_1= 0, x_2=-\infty$, which give a conical decomposition of $\R^2_{\max}$, are characterized by:
\begin{enumerate}
\item $x_1=0,x_2=-\infty$.
\item $x_1>0,x_2=-\infty$.
\item $x_1<0,x_2=-\infty$.
\item $x_2>-\infty$.
\end{enumerate}
\end{example}

\begin{example} The faces in $\R^3_{\max}$ associated to the hyperplane $x_1= 0, x_2=-\infty, x_3=-\infty$, which give a conical decomposition of $\R^3_{\max}$, are characterized by:
\begin{enumerate}
\item $x_1=0,x_2=-\infty,x_3=-\infty$.
\item $x_1>0,x_2=-\infty,x_3=-\infty$.
\item $x_1<0,x_2=-\infty,x_3=-\infty$.
\item $x_2>-\infty,$ or $x_3>-\infty$.
\end{enumerate}
\end{example}

\begin{definition}
Assume that a hyperplane $\mathcal{H}\subseteq \R^n_{\max}$ is defined by~\eqref{e:-boundaryhalfs1}. Let $F_1=F_1(I_{F_1}, \bar J_{F_1}),F_2=F_2(I_{F_2}, \bar J_{F_2})$ be two $k$-faces of $\mathcal{H}$. Define the \emph{boundary subface} of $F_1,F_2$ to be the face of $\mathcal{H}$ defined by the set of indices
$I_{F_1}\cup I_{F_2}\cup \bar J_{F_1}\cup \bar J_{F_2}$.

We denote the boundary subface of $F_1,F_2$ by $\text{Bd}(F_1,F_2)$.
\end{definition}

\section{Some auxiliary lemmas}\label{s:3}

\begin{lemma}\label{first-lemma}
Let $\mathcal{H}$ be a hyperplane described by equation~\eqref{e:-boundaryhalfs1}, let $(H_1, H_2)$ be a pair of complementary hemispaces related to $\mathcal{H}$, and let $F$ be a $k$-face or a face of type II of $\mathcal{H}$.

Then $F\subseteq H_1$ or $F\subseteq H_2$.
\end{lemma}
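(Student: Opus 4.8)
The plan is to reduce the statement to the case of pure $k$-faces and then to show that a pure face cannot meet both members of the pair by exhibiting a single point that would be forced into $H_1$ and into $H_2$ at once.

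First I would dispose of the faces that are not pure. If $F$ is a $k$-face with $\bar J_F=\emptyset$, then $K_F\subseteq I$, and for every $x\in F$ the maximum over $I$ is attained on $K_F$ while all indices of $\bar J$ lie in $(I\cup\bar J)\setminus K_F$ and hence carry strictly smaller entries; thus $\bigoplus_{i\in I}x_i>\bigoplus_{j\in J}x_j\oplus\alpha$, so $F$ is contained in the left-hand set of~\eqref{eq:line2} (or~\eqref{eq:line2-infinity}) and therefore $F\subseteq H_2$. The symmetric case $I_F=\emptyset$ gives $F\subseteq H_1$. If $F$ is a face of type II, then every point of $F$ has some finite coordinate $x_\ell$ with $\ell\in L$; since the closed halfspace $\overline{H_1}$ requires $x_\ell=-\infty$ for all $\ell\in L$ and $H_1\subseteq\overline{H_1}$, no point of $F$ can lie in $H_1$, whence $F\subseteq H_2$. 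This settles every case except pure $k$-faces, which lie in $\mathcal H$ (as observed after Lemma~\ref{l:somlem34}).

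For a pure face $F$ I argue by contradiction: suppose $x\in F\cap H_1$ and $y\in F\cap H_2$, and set $z=x\oplus y$, which again lies in $F$ since $F$ is a cone. I treat first the case in which $x$ and $y$ share the same balanced value $t$ on $K_F$ (so that $z$ also has value $t$ on $K_F$, and every coordinate on which $z_k>x_k$ is a ``raised'' coordinate lying strictly below $t$). I then produce a point $a\in\text{int}(H_1)$ with $z=x\oplus a$ as follows: set $a_k=z_k$ on the raised coordinates, lower every remaining $I$-entry far below $t$, and set the $\bar J_F$-entries to $t-\epsilon$ for $\epsilon$ small enough that $t-\epsilon$ exceeds all raised $I$-entries (each of which is $<t$). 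Then $\bigoplus_{i\in I}a_i<\bigoplus_{j\in\bar J}a_j$, so $a\in\text{int}(H_1)$, while a coordinatewise check over the groups $K_F$, $(I\cup\bar J)\setminus K_F$, $L$ and the free coordinates shows $z=x\oplus a$. Since $x,a\in H_1$ and $H_1$ is convex, $z=x\oplus a\in[x,a]\subseteq H_1$. A symmetric construction using $y$, the index set $\bar J_F$ and $\text{int}(H_2)$ yields $b\in\text{int}(H_2)$ with $z=y\oplus b$, forcing $z\in H_2$; this contradicts $H_1\cap H_2=\emptyset$.

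The step I expect to be the main obstacle is exactly the passage from equal balanced values to arbitrary $x,y\in F$, that is, controlling the common $K_F$-value of $z$ relative to those of $x$ and $y$. The witnesses $a$ and $b$ must approach the balanced level from opposite sides of~\eqref{e:-boundaryhalfs1}, and the above construction of $a$ breaks down when $y$ has a strictly larger balanced value than $x$, since then the $\bar J_F$-entries of $z$ are themselves raised and can no longer be pushed strictly below the $I$-side. I would therefore organize the proof around a preliminary normalization that brings $x$ and $y$ to a common balanced level within $F$ without leaving their respective hemispaces, using the cone structure of $F$ together with the convexity of $H_1$ and $H_2$ and, where needed, the extra term $x_{n+1}=0$ supplied by the free term; verifying that such a normalization is always available is the delicate point on which the argument turns.
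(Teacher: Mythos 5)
Your reductions are sound: a $k$-face with $I_F=\emptyset$ or $\bar J_F=\emptyset$ does lie in one of the strict interiors \eqref{eq:line1-infinity}--\eqref{eq:line2-infinity} (this is remark (d) after Lemma~\ref{l:somlem34}), and the type II case via the Briec--Horvath description of the closure is fine. The genuine gap is in the pure case, exactly where you flag it. The ``normalization'' you defer --- replacing $y\in F\cap H_2$ by a point of $F\cap H_2$ with the same balanced level as $x$ --- is not a technicality: it is equivalent to showing that two points of $F$ differing only in their balanced value lie in the same hemispace, which is a substantial piece of the lemma itself. The tools you propose do not supply it: $F$ is a cone, but $H_1$ and $H_2$ are not (already in $\R_{\max}$ the hemispace $[0,\infty)$ is not closed under scalar multiplication), so rescaling $y$ by $t_x-t_y$ may leave $H_2$; and convexity of $H_2$ only lets you move toward a point already known to lie in $H_2$, which is the very thing you are trying to produce. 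The paper closes this hole by an explicit construction (Case 1, first subcase of its proof): for $p,q\in F$ with $p_0=q_0+c$ on $K_F$ and all other coordinates equal, it exhibits $r\in\text{int}(H_1)$ and $s\in\text{int}(H_2)$ with $q=r\oplus(-c)p=s\oplus(-c)p$, so $q\in[p,r]\cap[p,s]$ and $p,q$ must lie in the same hemispace. Some such construction is what your argument still needs.

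There is also an unflagged failure inside the case you treat as settled. When $n+1\in\bar J_F$ the balanced level is pinned at $t=0$ by the free term, and then no $b\in\text{int}(H_2)$ with $z=y\oplus b$ can exist: any $b\in\text{int}(H_2)$ satisfies $\bigoplus_{i\in I}b_i>0$, so $y\oplus b$ acquires an $I$-coordinate $>0$, whereas every coordinate of $z$ is $\le 0$; moreover the entry indexed by $n+1$ is identically $0$ and cannot be ``lowered'' below the $I$-side. The remedy is to take a different point of the segment $[y,b]$, namely $z=\lambda b\oplus y$ with $\lambda<0$ and $b$ chosen above level $0$ --- precisely the paper's Case 2, where $s_i=1$ on $I_F$ and $p=(-1)s\oplus q$. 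Both this case analysis and the normalization must be supplied before the proof is complete.
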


\begin{proof} \emph{Assume first that $F$ is a $k$-face given by~\eqref{eq:face-def}.}

Throughout the proof, assume that the interior of $H_1$ satisfies~\eqref{eq:line1-infinity} and that the interior of $H_2$ satisfies~\eqref{eq:line2-infinity}.

Let $p=(p_i)_i, q=(q_i)_i \in F$. We will show that $p \in H_1$ if and only if $q \in H_1$ (which also gives us $p \in H_2$ if and only if $q \in H_2$ since $H_1$ and $H_2$ are complements).  To do so, we will find $r \in H_1$ and $s \in H_2$ such that either $p \in [q,r]$ and $p \in [q,s]$, or $q \in [p,r]$ and $q \in [p,s]$.  Since $H_1$ and $H_2$ are complements of each other and both convex, either of these scenarios establishes that $p \in H_1$ implies $q \in H_1$ and that $p \in H_2$ implies $q \in H_2$, which combine to yield the desired equivalence described above.  Observe that it suffices to consider the cases when $p$ and $q$ differ by only one coordinate or by a group of equal coordinates, since for any arbitrary $p,q \in F$ we could then go from $p$ to $q$ by a finite sequence of points $p, p_1, p_2, \ldots, p_{m-1}, p_m = q$ which changes only a coordinate or a group of equal coordinates at a time and preserves the property that $p \in H_1$ if and only if $p_i \in H_1$ ($i = 1, 2, \ldots, m$). Thus, $p$ and $q$ must be comparable and we can assume, without loss of generality, that $p > q$.

Throughout this process, the infinite coordinates $x_i, i\in L$ in  ~\eqref{eq:face-def} remain unchanged and we will ignore them in what follows. We will also ignore the coordinates that do not belong to $I\cup \bar J\cup L$, as they are not used in either of the equations~\eqref{eq:line1-infinity},~\eqref{eq:line2-infinity},~\eqref{eq:face-def}.

We first consider the case when $\alpha = 0$ and distinguish two subcases, $n+1 \notin \bar J_F$ and $n+1 \in \bar J_F$.

\emph{Case 1, $n+1 \notin \bar J_F$.}  Then $p_i := p_0 > 0$, $q_i := q_0 > 0$ for $i \in I_F \cup J_F$, and $p_i < p_0, q_i < q_0$ for $i \in (I \cup \bar J) \setminus (I_F \cup J_F)$.

If $p_0 > q_0 (>0)$ and all other coordinates of $p,q$ are equal, then $p_0 = q_0+c, c>0$.  Consider the point $r\in \R^n_{\max}$ of coordinates $r_i=q_0$ for $i \in J_F$,
$r_i=q_0 - 1$ for $i\in I_F$, and $r_i=q_i(=p_i)$ for $i\in (I\cup J)\setminus (I_F\cup J_F)$. Then $r$ satisfies equation~\eqref{eq:line1-infinity}, due to
\begin{equation*}
\bigoplus_{i\in I}r_i=\bigoplus_{i\in I\setminus I_F}q_i\oplus q_0-1<q_0 = \bigoplus_{j \in J}q_j \oplus0 = \bigoplus_{j\in J}r_j\oplus 0,
\end{equation*}
thus $r \in H_1$, and $q=r\oplus (-c)p$, which is a max-plus convex combination of $p,r$, so $q \in [p,r]$.  Now, consider the point $s\in \R^n_{\max}$ of coordinates $s_i=q_0$ for $i \in I_F$, $s_i = q_0-1$ for $i\in J_F$, and $s_i=q_i(=p_i)$ if $i\in (I\cup J)\setminus (I_F\cup J_F)$. Then $s$ satisfies equation~\eqref{eq:line2-infinity}, due to
\begin{equation*}
\bigoplus_{i\in I}s_i=\bigoplus_{i\in I}q_i=q_0 > \bigoplus_{j\in (J\setminus J_F)}q_j\oplus q_0-1 \oplus 0 = \bigoplus_{j\in J}s_j\oplus 0,
\end{equation*}
thus $s\in H_2$, and $q=s\oplus (-c)p$, which is a max-plus convex combination of $p,s$, so $q \in [p,s]$.  Hence we have found $r \in H_1$ and $s \in H_2$ such that $q \in [p,r] \cap [p,s]$, so we have that $p \in H_1$ if and only if $q \in H_1$.

Now, assume that $p_0 = q_0$ and $p,q$ differ on only one coordinate $k_0 \in (I \cup J) \setminus (I_F \cup J_F)$, so $p_{k_0} > q_{k_0}$.  Consider the point $r\in \R^n_{\max}$ of coordinates $r_i = p_0 (=q_0)$ for $i \in J_F$, $r_i = p_0-1(=q_0-1)$ for $i \in I_F$, $r_{k_0} = p_{k_0}$, and $r_i = p_i (=q_i)$ for $i \in (I \cup J \setminus \{k_0\})\setminus(I_F \cup J_F)$. Then $r$ satisfies equation~\eqref{eq:line1-infinity}, due to
\begin{equation*}
\bigoplus_{i\in I}r_i=\bigoplus_{i\in I \setminus I_F}p_i \oplus p_0-1< p_0 = \bigoplus_{j \in J}p_j \oplus0 = \bigoplus_{j \in J}r_j \oplus 0,
\end{equation*}
thus $r\in H_1$, and $p = r \oplus q$, which is a max-plus convex combination of $q,r$, so $p \in [q,r]$.  Now, consider the point $s\in \R^n_{\max}$ of coordinates $s_i=p_0(=q_0)$ for $i \in I_F$, $s_i=p_0-1(=q_0-1)$ for $i\in J_F$, $s_{k_0}=p_{k_0}$, and $s_i=p_i(=q_i)$ for $i\in (I\cup J\setminus \{k_0\})\setminus (I_F\cup J_F)$. Then $s$ satisfies equation~\eqref{eq:line2-infinity}, due to
\begin{equation*}
\bigoplus_{i\in I}s_i=\bigoplus_{i\in I}p_i=p_0>\bigoplus_{j\in J \setminus J_F}p_j \oplus p_0-1 \oplus 0 = \bigoplus_{j\in J}s_j \oplus 0,
\end{equation*}
thus $s\in H_2$, and $p = s \oplus q$, which is a max-plus convex combination of $q,s$, so $p \in [q,s]$.  Hence, $p \in H_1$ if and only if $q \in H_1$.

\emph{Case 2, $n+1 \in \bar J_F$.}  Then $p_i = q_i = 0$ for all $i \in I_F \cup \bar J_F$, and $p_i, q_i < 0$ for all $i \in (I \cup \bar J) \setminus (I_F \cup J_F)$.

Let the different coordinate of $p,q$ be $k_0 \in (I \cup J) \setminus (I_F \cup J_F)$, so $p_{k_o} > q_{k_0}$.  Consider the point $r\in \R^n_{\max}$ of coordinates $r_i=-1$ for $i \in I_F$, $r_{k_0}=p_{k_0}$, and $r_i=p_i(=q_i)$ for $i\in (I\cup J)\setminus (I_F\cup \{k_0\})$. Then $r$ satisfies equation~\eqref{eq:line1-infinity}, due to
\begin{equation*}
\bigoplus_{i\in I}r_i=\bigoplus_{i \in I \setminus I_F}p_i \oplus -1 < 0 = \bigoplus_{j \in J}p_j \oplus 0 = \bigoplus_{j \in J}r_j \oplus 0,
\end{equation*}
thus $r\in H_1$, and $p = r \oplus q$, which is a max-plus convex combination of $q,r$, so $p \in [q,r]$.  Now consider the point $s\in \R^n_{\max}$ of coordinates $s_i=1$ for $i \in I_F$, $s_{k_0}=p_{k_0}+1$ and $s_i=p_i(=q_i)$ if $i\in (I\cup J)\setminus (I_F \cup \{k_0\})$. Then $s$ satisfies equation~\eqref{eq:line2-infinity}, due to
\begin{equation*}
\bigoplus_{i\in I}s_i=1 > \bigoplus_{j\in J}s_j\oplus 0,
\end{equation*}
thus $s\in H_2$, and $p=(-1)s\oplus q$, which is a max-plus convex combination of $q,s$, so $p \in [q,s]$.  Hence, $p \in H_1$ if and only if $q \in H_1$.

For the case when $\alpha$ is missing from equation~\eqref{e:-boundaryhalfs1}, we can simply apply the proof for the case when $\alpha = 0$ and $n+1 \notin \bar J_F$, omitting the restriction that $p_0,q_0 > 0$ and any inclusions of $\alpha (= 0)$ in the quotations to equations ~\eqref{eq:line1-infinity} and ~\eqref{eq:line2-infinity}.

\emph{Assume now that $F$ is a face of type II given by~\eqref{eq:face-def3}.}

Let $p,q\in F$. It follows from~\eqref{eq:face-def3} that $p_i>-\infty$ for some $i\in L$ and $q_j>-\infty$ for some $j\in L$. Then any $z\in [p,q]$ has $z_{\ell}>-\infty$ for some $\ell\in L$, so $z\in F$. If $p\in H_1$ and $q\in H_2$, then there is a point in $[p,q]\subseteq F$ which belongs to their common boundary, $\mathcal{H}$. But from~\eqref{eq:face-def3} any point $z\in\mathcal{H}$ has $z_i=-\infty, i\in L,$ so $\mathcal{H}\cap F=\emptyset$.

We conclude that $F\subseteq H_1$ or $F\subseteq H_2$.
\end{proof}

The following lemma shows that an extra face of type I can be partitioned in two convex parts assigned to either hemispace in a pair of complementary hemispaces while preserving convexity.

\begin{lemma}\label{l:lemmaneginf}
Let $\mathcal{H}$ be a halfspace described by equation~\eqref{e:-boundaryhalfs1} and let $F_0$ be a face of type I. Let $F_0=F_1\cup F_2$ be a partition of $F_0$ in two convex sets. Let $(H_1, H_2)$ be a pair of complementary hemispaces related to $\mathcal{H}$ such that $F_1\subseteq H_1$ and $F_2\subseteq H_2$.  Then $\left ((H_1 \setminus F_1)\cup F_2, (H_2 \setminus F_2)\cup F_1\right )$ is a pair of complementary hemispaces related to $\mathcal{H}$ as well.
\end{lemma}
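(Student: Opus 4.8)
The plan is to check, for $H_1':=(H_1\setminus F_1)\cup F_2$ and $H_2':=(H_2\setminus F_2)\cup F_1$, the two requirements in the definition of a pair of complementary hemispaces: that $H_1',H_2'$ partition $\R^n_{\max}$, and that each is convex. The partition property is a routine set computation from $F_0=F_1\cup F_2$ (disjoint), $F_1\subseteq H_1$, $F_2\subseteq H_2$, and $H_1\cup H_2=\R^n_{\max}$; the same computation yields $H_1\cap F_0=F_1$, hence $H_1\setminus F_1=H_1\setminus F_0$ (and symmetrically for the index $2$). Since the construction is symmetric under interchanging the two indices, it then suffices to prove that $H_1'$ is convex.

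First I would record two elementary facts about the type-I face $F_0$; recall that $\alpha$ is missing here, so $\bar J=J$ and, by~\eqref{eq:face-def2}, $x\in F_0$ means $x_i=-\infty$ for all $i\in I\cup J\cup L$. Writing a point of a segment as $z=\alpha x\oplus\beta y$ with $\alpha\oplus\beta=0$, its $(I\cup J\cup L)$-coordinates are $z_i=\max(\alpha+x_i,\beta+y_i)$. (O1) If $x,y\notin F_0$ then $[x,y]\cap F_0=\emptyset$: forcing every such $z_i$ to be $-\infty$ would force $\alpha=-\infty$ (otherwise $x\in F_0$) and $\beta=-\infty$ (otherwise $y\in F_0$), contradicting $\alpha\oplus\beta=0$. (O2) If $x\notin F_0$ and $y\in F_0$, then every $z\in[x,y]$ with $z\neq y$ lies in the same face as $x$; indeed $y_i=-\infty$ on $I\cup J\cup L$ collapses $z_i$ to $\alpha+x_i$, and $z\neq y$ forces $\alpha\neq-\infty$, so the $(I\cup J\cup L)$-coordinates of $z$ are a common finite shift of those of $x$. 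Since the face containing a point outside $F_0$ is determined only by the pattern of equalities, strict inequalities, and $-\infty$ entries among its $(I\cup J\cup L)$-coordinates (Definition~\ref{d:faces23}), a pattern unchanged by a common shift, $z$ and $x$ occupy the same face; in particular $[x,y]\cap F_0=\{y\}$.

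With (O1) and (O2) established, convexity of $H_1'$ follows by splitting an arbitrary pair $x,y\in H_1'$ into three cases. If $x,y\in F_2$, then $[x,y]\subseteq F_2\subseteq H_1'$ because $F_2$ is convex by hypothesis. If $x,y\in H_1\setminus F_1=H_1\setminus F_0$, then $[x,y]\subseteq H_1$ by convexity of $H_1$ and $[x,y]\cap F_0=\emptyset$ by (O1), so $[x,y]\subseteq H_1\setminus F_0=H_1\setminus F_1\subseteq H_1'$. If one point lies in each piece, say $x\in H_1\setminus F_0$ and $y\in F_2\subseteq F_0$, then (O2) puts $[x,y]\setminus\{y\}$ inside the face of $x$; this face is a $k$-face or a type-II face (since $x\notin F_0$), hence contained in $H_1$ by Lemma~\ref{first-lemma}, and being distinct from $F_0$ it is disjoint from $F_0\supseteq F_1$ (Lemma~\ref{l:somlem34}), so $[x,y]\setminus\{y\}\subseteq H_1\setminus F_1\subseteq H_1'$ while $y\in F_2\subseteq H_1'$. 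Thus $[x,y]\subseteq H_1'$ in all cases, and the symmetric argument proves $H_2'$ convex.

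The step I expect to be the crux is (O2): one must verify that sliding a point of $\R^n_{\max}\setminus F_0$ toward $F_0$ along a segment rescales its $(I\cup J\cup L)$-coordinates by a common scalar and therefore never changes its face, so that the only coordinates on which the partition $F_0=F_1\cup F_2$ can depend --- the free coordinates outside $I\cup J\cup L$, the sole coordinates separating points of $F_0$ --- are invisible to the face (and hence to the hemispace) of any point lying off $F_0$. This is precisely what decouples the reassignment of $F_1$ and $F_2$ from the rest of the space and makes the swap preserve convexity, uniformly and regardless of whether $F_1$ or $F_2$ is empty.
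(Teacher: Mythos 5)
Your proof is correct and follows essentially the same route as the paper's: reduce to convexity of $(H_1\setminus F_1)\cup F_2$ by symmetry, split into the same three cases, and in the mixed case use the fact that the $(I\cup J\cup L)$-coordinates of a point of $[x,y]\setminus\{y\}$ are a common finite shift of those of $x$, so its face (hence its hemispace) does not change --- your observation (O2) is just a cleaner packaging of the paper's remark that faces are cones determined only by the coordinates in $I\cup J\cup L$. The only additions are the explicit verification of the partition property and the explicit appeal to Lemmas~\ref{first-lemma} and~\ref{l:somlem34}, which the paper leaves implicit.
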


\begin{proof} We observe that due to Lemma~\ref{first-lemma} the faces associated to $\mathcal{H}$ that are different from $F_0$ belong to only one of the hemispaces. Consequently, $H_1\setminus F_1$ and $H_2\setminus F_2$ are unions of faces. As each face is a cone, it follows that $p\in H_1\setminus F_1$ or $p\in H_2$ implies $\alpha p\in H_1\setminus F_1,$ respectively $\alpha p\in H_2\setminus F_2,$ for each $\alpha>-\infty$. As each face associated to $\mathcal{H}$ is defined only by the coordinates in $I\cup J\cup L$, it follows that if $p\in H_i\setminus F_i, i=1,2,$ and $(\alpha p)_i, i\in I\cup J\cup L,$ for some $\alpha>-\infty,$ then $\alpha p\in H_i\setminus F_i, i=1,2$.

Due to the symmetry, it is enough to show that $(H_1 \setminus F_1)\cup F_2$ is convex. Let $p,q\in (H_1 \setminus F_1)\cup F_2$. To prove that $[p,q]\subseteq (H_1 \setminus F_1)\cup F_2,$ it is enough to show the following:
\begin{enumerate}
\item $p,q\in H_1\setminus F_1$ implies $[p,q]\subseteq (H_1\setminus F_1)\cup F_2$;
\item $p\in H_1\setminus F_1, q\in F_2$ implies $[p,q]\subseteq (H_1\setminus F_1)\cup F_2$;
\item $p,q\in F_2$ implies $[p,q]\subseteq (H_1\setminus F_1)\cup F_2$.
\end{enumerate}

1. Clearly $[p,q]\subseteq H_1$ as $p,q\in H_1$ and $H_1$ is a convex set. As $H_1, H_2$ are disjoint and $F_2\subseteq H_2,$ one has $[p,q]\cap F_2=\emptyset$.

If $p,q\not \in F_1$, then $p,q\not \in F_0$ and it follows from~\eqref{eq:face-def2} that $p_i\not =-\infty, q_j\not =-\infty,$ for some $i,j\in I\cup J\cup L,$ which implies that for any point $z\in [p,q]$ one has $z_i>-\infty$  for some $i\in I\cup J\cup L,$ thus $z\not \in F_0$ and $[p,q]\subseteq (H_1\setminus F_1)\cup F_2$.

2. Let $z = \alpha p \oplus \beta q \in [p, q]$, $\alpha\oplus\beta=0$.  If $\beta = 0$ then $z = \alpha p\oplus q$. If $\alpha=-\infty$, then $z=q\in F_2$. If $\alpha\not =-\infty,$ then $z_i=(\alpha p)_i, i\in I\cup J\cup L,$ and from the observation at the beginning of the proof it follows that $z\in H_1\setminus F_1$. If $\alpha = 0$, then $z = p\oplus \beta q$. As the coordinates $z_i, i\in I\cup J\cup L,$ coincide with the corresponding coordinates of $p$, it follows that $z\in H_1\setminus F_1$.

3. This follows right away from the fact that $F_2$ is a convex set.
\end{proof}

The following lemma shows that an extra face of type II can be assigned to either hemispace in a pair of complementary hemispaces preserving convexity.

\begin{lemma}\label{l:lemmaneginf2}
Let $\mathcal{H}$ be a halfspace described by equation~\eqref{e:-boundaryhalfs1} and let $F$ be a face of type II. Let $(H_1, H_2)$ be a pair of complementary hemispaces related to $\mathcal{H}$ such that $F\subseteq H_1$.  Then $(H_1 \setminus F, H_2\cup F)$ is a pair of complementary hemispaces related to $\mathcal{H}$ as well.
\end{lemma}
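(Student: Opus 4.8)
The plan is to mirror the structure of the proof of Lemma~\ref{l:lemmaneginf}, but the argument is shorter because no partition of $F$ is required: the entire type II face is transferred as a single convex block. First I would record the set-theoretic facts. Since $F\subseteq H_1$ and $H_1\cap H_2=\emptyset$, we have $F\cap H_2=\emptyset$, so $(H_1\setminus F)\cap(H_2\cup F)=\emptyset$ and $(H_1\setminus F)\cup(H_2\cup F)=H_1\cup H_2=\R^n_{\max}$; thus the two new sets are genuine complements. By Lemma~\ref{first-lemma} every $k$-face and every face of type II lies entirely in one hemispace, so the only face being relocated is $F$ itself, and the assignment of all the remaining faces is untouched. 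It then remains only to check that each of the two new sets is convex.

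The decisive structural observation is that, by~\eqref{eq:face-def3}, a point lies in $F$ exactly when $x_\ell>-\infty$ for some $\ell\in L$, so its complement consists of the points with $x_\ell=-\infty$ for all $\ell\in L$. This complement is stable under segments: if $p,q\notin F$ and $z=\max(\alpha+p,\beta+q)\in[p,q]$ with $\max(\alpha,\beta)=0$, then for every $\ell\in L$ we get $z_\ell=\max(\alpha+(-\infty),\beta+(-\infty))=-\infty$, whence $z\notin F$. This yields the convexity of $H_1\setminus F$ at once: for $p,q\in H_1\setminus F$ the segment $[p,q]$ stays in $H_1$ by convexity of $H_1$ and stays outside $F$ by the observation above, so $[p,q]\subseteq H_1\setminus F$.

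For the convexity of $H_2\cup F$ I would take $p,q\in H_2\cup F$ and an arbitrary $z=\max(\alpha+p,\beta+q)$ with $\max(\alpha,\beta)=0$, and split into cases. If $p,q\in H_2$ then $z\in H_2$ by convexity of $H_2$; if $p,q\in F$ then $z\in F$ since $F$ is convex (the fact already used in the proof of Lemma~\ref{first-lemma}). The only case needing care is the mixed one, say $p\in H_2$ and $q\in F$: here, if $\beta=-\infty$ then $\alpha=0$ and $z=p\in H_2$, whereas if $\beta>-\infty$ then, choosing the index $\ell\in L$ with $q_\ell>-\infty$, one has $z_\ell\ge\beta+q_\ell>-\infty$, forcing $z\in F$. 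In every subcase $z\in H_2\cup F$, so $[p,q]\subseteq H_2\cup F$.

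The main (and essentially the only) obstacle is this mixed case: one must verify that combining an $H_2$-point with an $F$-point never lands in $H_1\setminus F$, and the key is that any strictly finite weight on the $F$-endpoint propagates the ``finite somewhere on $L$'' property to $z$ and pulls it into $F$, while a weight of $-\infty$ collapses the combination back to the $H_2$-endpoint. Finally, since $\mathcal{H}$ satisfies $x_\ell=-\infty$ for $\ell\in L$, we have $\mathcal{H}\cap F=\emptyset$, and every pure face (whose union is $\mathcal{H}$) also lies in $\{x\mid x_\ell=-\infty \text{ for all } \ell\in L\}$ and is therefore left in place; relocating $F$ changes neither the hyperplane $\mathcal{H}$ nor the assignment of any face meeting it, so the new pair is related to the same $\mathcal{H}$, which completes the proof.
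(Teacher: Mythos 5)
Your proof is correct and follows essentially the same route as the paper's: convexity of $H_1\setminus F$ via the segment-stability of $\{x \mid x_\ell=-\infty \text{ for all } \ell\in L\}$, and convexity of $H_2\cup F$ via the observation that any finite weight on the $F$-endpoint forces some coordinate in $L$ of the combination to be finite, hence keeps it in $F$. Your explicit checks that the two new sets are complementary and that the pair remains related to $\mathcal{H}$ are left implicit in the paper but are welcome additions.
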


\begin{proof}
We show that $H_1 \setminus F$ is convex. Indeed if $p,q\in H_1 \setminus F$, then clearly $[p,q]\in H_1$ as $H_1$ is convex. Moreover, it follows from~\eqref{eq:face-def3} that $p_i=-\infty, q_i=-\infty,i\in L,$ which implies that for any point $z\in [p,q]$ one has $z_i=-\infty,i\in L,$ thus $z\not \in F$ and $z\in H_1 \setminus F$.

It remains to show that $H_2 \cup F$ is convex. Let $p \in H_2, q\in F$ be fixed. The other cases follow immediately from the fact that $H_2$ and $F$ are convex. Let $z = \alpha p \oplus \beta q \in [p, q]$.  If $\beta = 0$ then $z = \alpha p\oplus q$. Then $z_i\ge q_i, i\in L,$ so from~\eqref{eq:face-def3} at least one of $z_i, i\in L,$ is greater then $-\infty$, so $z\in F\subseteq H_2\cup F$. If $\alpha = 0$, then $z = p\oplus \beta q$. If $\beta=-\infty,$ one has $z=p$, so $z\in H_2\subseteq H_2\cup F$. If $\beta\not =-\infty,$ then $\beta q$ has at least a coordinate $(\beta q)_i, i\in L,$ that is greater then $-\infty$, so it follows that $z_i>-\infty$ and therefore $z\in F\subseteq H_2\cup F$.
\end{proof}

\begin{lemma}\label{l:lemma2} Let $\mathcal{H}\subseteq \R^n_{\max}$ be a max-plus hyperplane given by~\eqref{e:-boundaryhalfs1}.
Let $F_1,F_2$ be $k$-faces of $\mathcal{H}$ and $F_3=\text{Bd}(F_1,F_2)$. Let $x\in F_1, y\in F_2$. Then the segment $[x,y]$ is included in the union $F_1\cup F_2\cup F_3$, and $[x,y]\cap F_i\not =\emptyset, 1\le i\le 3.$
\end{lemma}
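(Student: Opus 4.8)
The plan is to parametrize the segment and reduce membership in a face to a single statement about the set of coordinates on which the maximum is attained, which trivializes the bookkeeping.

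First I would record the index-set description: $F_1,F_2$ are determined by $K_{F_1}=I_{F_1}\cup\bar J_{F_1}$ and $K_{F_2}=I_{F_2}\cup\bar J_{F_2}$, while $F_3=\text{Bd}(F_1,F_2)$ has index set $K_{F_3}=K_{F_1}\cup K_{F_2}$ (a legitimate face by Lemma~\ref{l:somlem34}(1)). The central device is to rephrase~\eqref{eq:face-def} as an \emph{argmax} condition: since every coordinate indexed by $L$ equals $-\infty$ throughout (and $z_\ell=\max(\lambda-\infty,\mu-\infty)=-\infty$ is preserved under max-plus combinations), a point $w$ lies in the face with index set $K$ if and only if $\max_{i\in I\cup\bar J}w_i>-\infty$ and the set of indices attaining this maximum is exactly $K$. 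This reformulation is what keeps the free-term coordinate $x_{n+1}=0$ (present when $\alpha=0$) on the same footing as every other coordinate, so that no separate case is needed for it.

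Next, for $x\in F_1,y\in F_2$ let $a,b$ be the finite common values of $x$ on $K_{F_1}$ and of $y$ on $K_{F_2}$, equivalently $a=\max_i x_i$, $b=\max_i y_i$. A general point of $[x,y]$ is $z=\max(\lambda+x,\mu+y)$ (componentwise) with $\max(\lambda,\mu)=0$; I use $\lambda,\mu$ rather than $\alpha,\beta$ to avoid the free-term $\alpha$. Its maximal coordinate is $M(z)=\max(\lambda+a,\mu+b)$, so writing $A=\lambda+a$, $B=\mu+b$ I would split into the trichotomy $A>B$, $A=B$, $A<B$. When $A>B$, the defining strict inequalities $x_i<a$ for $i\notin K_{F_1}$ together with $\mu+y_i\le B<A$ show that the argmax of $z$ is exactly $K_{F_1}$, so $z\in F_1$; the case $A<B$ is symmetric and gives $z\in F_2$; and when $A=B$ the indices of $K_{F_1}$ (via $\lambda+x_i$) and of $K_{F_2}$ (via $\mu+y_i$) all attain the common value, while every index outside $K_{F_1}\cup K_{F_2}$ is strictly below it, so the argmax is $K_{F_3}$ and $z\in F_3$. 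This establishes $[x,y]\subseteq F_1\cup F_2\cup F_3$.

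For the nonemptiness of the three intersections, the endpoints already give $x\in[x,y]\cap F_1$ (take $\lambda=0,\mu=-\infty$) and $y\in[x,y]\cap F_2$. For $F_3$ I would solve $A=B$ under $\max(\lambda,\mu)=0$: if $a\ge b$ take $\mu=0,\lambda=b-a\le 0$, and if $a<b$ take $\lambda=0,\mu=a-b<0$; in either case $z$ has $A=B$ and lands in $F_3$. The step needing the most care, which I would flag as the main obstacle, is verifying the strict $>-\infty$ clause of~\eqref{eq:face-def}, i.e.\ that $M(z)$ is finite in each branch; this is precisely where $\max(\lambda,\mu)=0$ is used, since it forbids both scalars from being $-\infty$ and thus forces $A$, $B$, or their common value to be finite, and it also automatically reconciles the forced value $z_{n+1}=\max(\lambda,\mu)=0$ with face membership when $\alpha=0$. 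The degenerate possibility $F_1=F_2$ (whence $F_3=F_1$) is immediate because each face is a convex cone.
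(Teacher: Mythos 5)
Your proof is correct and follows essentially the same route as the paper's: both parametrize a point of the segment as a max-plus combination and decide its face by comparing the shifted common values $\lambda+a$ and $\mu+b$ (the paper's trichotomy on $\beta$ relative to $p-q$ is exactly your trichotomy $A>B$, $A=B$, $A<B$). Your argmax reformulation and the explicit checks of finiteness and of the coordinate $x_{n+1}$ are just a more carefully spelled-out version of the paper's terser case analysis.
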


\begin{proof} Let $F_1=F_1(I_{F_1},\bar J_{F_1}), F_2=F_2(I_{F_2},\bar J_{F_2})$. Then
\begin{equation}\label{eq:face-point1}
\begin{gathered}
p:=x_{i_1}=x_{i_2}, i_1,i_2\in I_{F_1}\cup \bar J_{F_1},\\
x_k<x_i, i\in I_{F_1}\cup \bar J_{F_1}, k\in (I\cup \bar J)\setminus (I_{F_1}\cup \bar J_{F_1}),\\
x_{\ell}=-\infty, \ell\in L
\end{gathered}
\end{equation}
and
\begin{equation}\label{eq:face-point2}
\begin{gathered}
q:=y_{i_1}=y_{i_2}, i_1,i_2\in I_{F_2}\cup \bar J_{F_2},\\
y_k<y_i, i\in I_{F_2}\cup \bar J_{F_2}, k\in (I\cup \bar J)\setminus (I_{F_2}\cup \bar J_{F_2}),\\
y_{\ell}=-\infty, \ell\in L.
\end{gathered}
\end{equation}

Without loss, assume $p \leq q$. Let $z=\alpha x\oplus \beta y\in [x,y]$. If $\beta=0$ then $z \in F_2$ except if $p=q$ and $\alpha = 0$, in which case $z \in F_3$.  If $\alpha=0$ then
$z\in F_1$ for $\beta<p-q$, $z\in F_3$ for $\beta=p-q$, and $z\in F_2$ for $p-q<\beta\leq 0$ (where this case does not occur if $p=q$).
\end{proof}

\section{The main result}\label{s:main}

In this section we give a geometric description for a hemispace related to a hyperplane.

It follows from Lemma~\ref{first-lemma} that if $(H_1,H_2)$ is a pair of complementary hemispaces related to a hyperplane $\mathcal{H}$, then any $k$-face of $\mathcal{H}$ is included either in $H_1$ or in $H_2$. The following theorem describes the partition of the $k$-faces among $H_1$ and $H_2$.

\begin{theorem}\label{t:main} Let $(H_1,H_2)$ be a pair of complementary hemispaces related to the hyperplane $\mathcal{H}$ given by~\eqref{e:-boundaryhalfs1}. Let $\mathcal{F}$ be the set of $k$-faces of $\mathcal{H}$, $\mathcal{F}_1 \subseteq \mathcal{F}$ be the set of $k$-faces included in $H_1$ and  $\mathcal{F}_2 \subseteq \mathcal{F}$ be the set of $k$-faces included in $H_2$. Let $\mathcal{K}_1$ be the collection of sets of indices of $\mathcal{F}_1$ and $\mathcal{K}_2$ be the collection of sets of indices of $\mathcal{F}_2$. Then $\mathcal{K}_1$ and $\mathcal{K}_2$ are each closed under union. Moreover, $\mathcal{P}(I)$ is a subset of one and $\mathcal{P}(\bar J)$ is a subset of the other.

Conversely, assume that the hyperplane $\mathcal{H}$ is given by~\eqref{e:-boundaryhalfs1}. Then any partition of the set $\mathcal{F}$ of $k$-faces of $\mathcal{H}$  in two subsets $\mathcal{F}_1, \mathcal{F}_2$ with collections of sets of indices $\mathcal{K}_1, \mathcal{K}_2$ closed under union
determines a pair of disjoint convex sets
given by $\cup_{F\in \mathcal{F}_1}F$ and $\cup_{F\in \mathcal{F}_2}F$. Moreover, if $\mathcal{P}(I)$ is a subset of $\mathcal{K}_1$ and $\mathcal{P}(\bar J)$ is a subset of $\mathcal{K}_2$, then there exists a pair $(H_1,H_2)$ of complementary hemispaces related to $\mathcal{H}$, such that one convex sets above is a subset of $H_1$ and the other is a subset of $H_2$.
\end{theorem}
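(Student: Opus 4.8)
I would prove the two implications separately, using Lemma~\ref{first-lemma} and Lemma~\ref{l:lemma2} as the main engine, and reserving Lemmas~\ref{l:lemmaneginf} and~\ref{l:lemmaneginf2} for the extra faces in the converse. For the forward direction, Lemma~\ref{first-lemma} already forces each $k$-face to lie entirely in $H_1$ or in $H_2$, so $\mathcal{F}_1,\mathcal{F}_2$ partition $\mathcal{F}$. To see that $\mathcal{K}_1$ is closed under union I would take distinct $F_1,F_2\in\mathcal{F}_1$, pick $x\in F_1$ and $y\in F_2$, and apply Lemma~\ref{l:lemma2}: the segment $[x,y]$ lies in $F_1\cup F_2\cup F_3$, where $F_3=\mathrm{Bd}(F_1,F_2)$ has set of indices $K_{F_1}\cup K_{F_2}$, and $[x,y]$ meets $F_3$. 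Since $F_1,F_2\subseteq H_1$ and $H_1$ is convex, $[x,y]\subseteq H_1$, so $F_3$ meets $H_1$ and hence $F_3\subseteq H_1$ by Lemma~\ref{first-lemma}; thus $K_{F_1}\cup K_{F_2}\in\mathcal{K}_1$. The argument for $\mathcal{K}_2$ is identical. For the last clause, after relabeling so that $-\infty\in H_1$, the interiors are~\eqref{eq:line1-infinity} and~\eqref{eq:line2-infinity}; a face whose set of indices is contained in $\bar J$ satisfies the strict inequality in~\eqref{eq:line1-infinity} and so lies in $H_1$, while a face whose set of indices is contained in $I$ lies in $H_2$, giving $\mathcal{P}(\bar J)\subseteq\mathcal{K}_1$ and $\mathcal{P}(I)\subseteq\mathcal{K}_2$.

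For the converse, set $C_1=\bigcup_{F\in\mathcal{F}_1}F$ and $C_2=\bigcup_{F\in\mathcal{F}_2}F$. Disjointness is immediate from Lemma~\ref{l:somlem34}(2). For convexity of $C_1$ I would take $x,y\in C_1$ lying in faces $F_1,F_2\in\mathcal{F}_1$: if $F_1=F_2$ then $[x,y]\subseteq F_1$ because a face is convex, and if $F_1\neq F_2$ then Lemma~\ref{l:lemma2} gives $[x,y]\subseteq F_1\cup F_2\cup\mathrm{Bd}(F_1,F_2)$, while closure of $\mathcal{K}_1$ under union forces $\mathrm{Bd}(F_1,F_2)\in\mathcal{F}_1$, so $[x,y]\subseteq C_1$. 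The same reasoning yields convexity of $C_2$, which settles the first assertion.

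For the second assertion, the hypotheses $\mathcal{P}(I)\subseteq\mathcal{K}_1$ and $\mathcal{P}(\bar J)\subseteq\mathcal{K}_2$ place the entire open region described by~\eqref{eq:line2-infinity} inside $C_1$ and the entire open region described by~\eqref{eq:line1-infinity} inside $C_2$. By Lemma~\ref{l:somlem34}(3) the only points of $\R^n_{\max}$ not already covered by $C_1\cup C_2$ lie in the extra faces of type~I and type~II. I would complete the pair by appending the type~II face $F_{\mathrm{II}}$ (present when $L\neq\emptyset$) to $C_1$ and the type~I face $F_{\mathrm{I}}$ (present when $\alpha$ is missing) to $C_2$, setting $H_2=C_1\cup F_{\mathrm{II}}$ and $H_1=C_2\cup F_{\mathrm{I}}$; these are complementary by Lemma~\ref{l:somlem34}(2),(3). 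Convexity is checked directly: a segment from a point of $C_1$ to a point of $F_{\mathrm{II}}$ either degenerates to its $C_1$-endpoint or acquires a finite coordinate indexed by $L$ and so lands in $F_{\mathrm{II}}$, whereas a segment from a point of $C_2$ to a point of $F_{\mathrm{I}}$ either equals its $F_{\mathrm{I}}$-endpoint or, since membership in a $k$-face depends only on the coordinates indexed by $I\cup J\cup L$, remains in the same $k$-face of $C_2$; alternatively one may invoke Lemmas~\ref{l:lemmaneginf} and~\ref{l:lemmaneginf2}. Finally, since the region~\eqref{eq:line1-infinity} is contained in $C_2\subseteq H_1$ and the region~\eqref{eq:line2-infinity} is contained in $C_1\subseteq H_2$, the closure of $H_1$ is the closed halfspace bounded by $\mathcal{H}$, so $(H_1,H_2)$ is related to $\mathcal{H}$ with $C_2\subseteq H_1$ and $C_1\subseteq H_2$.

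The step I expect to be the main obstacle is this second assertion: the sets $C_1,C_2$ need not cover $\R^n_{\max}$, so one must correctly distribute the extra faces of type~I and type~II---which are not among the faces being partitioned---and then verify both the convexity of the enlarged sets and that the bounding hyperplane of the resulting pair is exactly $\mathcal{H}$. Everything else reduces to repeated applications of Lemmas~\ref{first-lemma} and~\ref{l:lemma2} together with the closure-under-union hypothesis.
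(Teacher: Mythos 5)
Your proof is correct, and for the forward direction and for the convexity half of the converse it follows essentially the same route as the paper: Lemma~\ref{first-lemma} places each $k$-face wholly in one hemispace; for $F,G\subseteq H_1$, Lemma~\ref{l:lemma2} together with convexity of $H_1$ forces $\text{Bd}(F,G)$ to meet, hence lie in, $H_1$, giving closure of $\mathcal{K}_1$ under union; and the same segment decomposition yields convexity of $\cup_{F\in\mathcal{F}_1}F$ and $\cup_{F\in\mathcal{F}_2}F$. (Your derivation of the $\mathcal{P}(I)$/$\mathcal{P}(\bar J)$ clause via the interiors~\eqref{eq:line1-infinity},~\eqref{eq:line2-infinity} is a little more explicit than the paper's one line about singletons plus union-closure, but it is the same in spirit.) The genuine divergence is in the final existence claim of the converse: the paper dispatches it by invoking the Stone--Kakutani separation theorem to extend the two disjoint convex sets to a complementary pair, whereas you construct the pair explicitly by adjoining the type~II face to one union of $k$-faces and the type~I face to the other and verifying convexity of the enlarged sets directly. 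Your route is longer but buys more: it avoids the abstract (Zorn's-lemma-based) separation theorem, it exhibits the hemispaces concretely as unions of faces, and it essentially proves the sharper statement recorded afterwards as Theorem~\ref{t:main2}. Two small caveats, neither fatal: the aside ``alternatively one may invoke Lemmas~\ref{l:lemmaneginf} and~\ref{l:lemmaneginf2}'' does not quite work as written, since those lemmas presuppose an already-given pair of complementary hemispaces rather than producing one; and in the step where a segment $\alpha p\oplus q$ with $-\infty<\alpha<0$ ``remains in the same $k$-face'' you are implicitly using that faces are cones, so that $\alpha p$ lies in the same face as $p$ --- this deserves an explicit mention.
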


\begin{proof} Let $K_F, K_G\in \mathcal{K}_1$ be the sets of indices for the $k$-faces $F,G \subseteq H_1$. It follows from Lemma~\ref{l:lemma2} that any segment with endpoints in $F$ and $G$ intersects $\text{Bd}(F,G)$. As $H_1$ is convex, $\text{Bd}(F,G)$ has to intersect $H_1$. It follows now from Lemma~\ref{first-lemma} that the face $\text{Bd}(F,G)$ is included in $H_1$. As the set of indices of $\text{Bd}(F,G)$ is $K_F\cup K_G$, we have $K_F\cup K_G\in\mathcal{K}_1$. The partition of the singletons and the closure under the union forces $\mathcal{P}(I)$ to be a subset of one of $\mathcal{K}_i$ and $\mathcal{P}(\bar J)$ to be a subset of the other.

Conversely, assume that the collections of sets of indices $\mathcal{K}_1, \mathcal{K}_2$ are closed under union. We must show that the (obviously disjoint) unions $\cup_{F\in \mathcal{F}_1}F, \cup_{F\in \mathcal{F}_2}F$ are both convex. Let $x,y\in \cup_{F\in \mathcal{F}_1}F$. If $x,y$ belong to the same $k$-face $F$, then $[x,y]\subseteq F$, as $F$ is convex. If $x\in F_1, y\in F_2$, $F_1,F_2$ different $k$-faces in  $\mathcal{F}_1$, then it follows from Lemma~\ref{l:lemma2} that the segment $[x,y]$ is the concatenation of three parts, one included in $F_1$, one in $F_2$, and one included in $\text{Bd}(F_1,F_2)$. Due to the closeness under union of $\mathcal{K}_1$, one has $\text{Bd}(F_1,F_2)\in\mathcal{F}_1$. So $[x,y]\subseteq \cup_{F\in \mathcal{F}_1}F$. It follows that $\cup_{F\in \mathcal{F}_1}F, \cup_{F\in \mathcal{F}_2}F$ are both convex. The fact that there exists a pair of complementary hemispaces separating these disjoint convex sets is immediate from Stone-Kakutani theorem. The extra condition $ \mathcal{P}(\bar J)\subseteq \mathcal{K}_1, \mathcal{P}(I)\subseteq \mathcal{K}_2$ forces the assignment of the singletons and implies that the pair of hemispaces is related to $\mathcal{H}$.
\end{proof}

We are ready to describe the geometric structure of a general pair of complementary hemispaces associated to a hyperplane.

\begin{theorem}\label{t:main2} Let $\mathcal{H}$ be a hyperplane given by~\eqref{e:-boundaryhalfs1}. Let $\mathcal{F}$ be the set of $k$-faces associated to $\mathcal{H}$, with the collection of the sets of indices $\mathcal{K}$, $F_I$ be the face of type I (if any), and $F_{II}$ be the face of type II (if any). A decomposition of $\R^n_{\max}$ in a pair of complementary hemispaces $(H_1, H_2)$ related to $\mathcal{H}$ is obtained in the following way:
\begin{itemize}
\item take a partition of $\mathcal{K}$ into two families closed under union, with corresponding sets of faces $\mathcal{F}_1, \mathcal{F}_2$;
\item take a partition of $F_I$ into two convex sets $F_I^1, F_I^2$;
\end{itemize}
and then define
\begin{equation}
\begin{aligned}
H_1&=\left (\cup_{F\in \mathcal{F}_1}F\right )\cup F_I^1\cup F_{II};\\
H_2&=\left (\cup_{F\in \mathcal{F}_2}F\right )\cup F_I^2.
\end{aligned}
\end{equation}
\end{theorem}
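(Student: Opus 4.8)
The plan is to avoid checking convexity of $H_1$ and $H_2$ directly, and instead to bootstrap from Theorem~\ref{t:main} and then repair the two exceptional faces with Lemmas~\ref{l:lemmaneginf} and~\ref{l:lemmaneginf2}. As a preliminary reduction, I would note that by Lemma~\ref{l:somlem34} the $k$-faces together with $F_I$ and $F_{II}$ are pairwise disjoint and cover $\R^n_{\max}$; since $F_I=F_I^1\cup F_I^2$ is a partition, the sets $H_1,H_2$ produced by the construction are automatically complementary as a set partition. Thus the statement reduces to exhibiting an honest pair of complementary hemispaces related to $\mathcal{H}$ that realizes the prescribed allocation of faces.

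First I would apply the converse half of Theorem~\ref{t:main} to the $k$-face partition. Labelling the families so that $\mathcal{P}(I)\subseteq\mathcal{K}_1$ and $\mathcal{P}(\bar J)\subseteq\mathcal{K}_2$ --- the condition that pins the resulting pair to $\mathcal{H}$ rather than to some other hyperplane, and which must be in force for the conclusion that the pair is related to $\mathcal{H}$ --- Theorem~\ref{t:main} yields a pair $(G_1,G_2)$ of complementary hemispaces related to $\mathcal{H}$ with $\cup_{F\in\mathcal{F}_1}F\subseteq G_1$ and $\cup_{F\in\mathcal{F}_2}F\subseteq G_2$. By Lemma~\ref{first-lemma} every $k$-face sits on a single side, so the disjointness of $G_1,G_2$ forces $G_i$ to meet the union of the $k$-faces in exactly $\cup_{F\in\mathcal{F}_i}F$. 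Hence $(G_1,G_2)$ already realizes the target $k$-face allocation and can disagree with $(H_1,H_2)$ only on $F_I$ and $F_{II}$.

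Next I would correct these two faces, using that each correction is local to its face and so commutes with the other. If $F_{II}$ is present it lies wholly in $G_1$ or $G_2$ by Lemma~\ref{first-lemma}; applying Lemma~\ref{l:lemmaneginf2} (with the two hemispaces interchanged if necessary) I move it onto the $G_1$-side, staying a complementary pair related to $\mathcal{H}$ and matching the recipe. If $F_I$ is present, then $F_I\cap G_1$ and $F_I\cap G_2$ partition $F_I$ into two convex sets, and I must exchange this for the prescribed convex partition $F_I^1,F_I^2$. After both adjustments the pair coincides with $(H_1,H_2)$, which is therefore a pair of complementary hemispaces related to $\mathcal{H}$.

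I expect the $F_I$-repartition to be the main obstacle, since Lemma~\ref{l:lemmaneginf} is stated only as the swap $F_I\cap G_1\leftrightarrow F_I\cap G_2$, whereas the theorem permits an arbitrary convex partition of $F_I$. The resolution is to observe that the proof of Lemma~\ref{l:lemmaneginf} uses nothing about the incoming piece beyond its convexity and the vanishing of its coordinates indexed by $I\cup J\cup L$, together with the fact that $G_i\setminus(F_I\cap G_i)$ is a union of cones determined by those same coordinates; the identical three-case argument then shows that $\left((G_1\setminus(F_I\cap G_1))\cup F_I^1,\ (G_2\setminus(F_I\cap G_2))\cup F_I^2\right)$ is again a complementary pair, which remains related to $\mathcal{H}$ because $F_I\subseteq\mathcal{H}$ so only boundary points are reshuffled. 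I would record this strengthened form of Lemma~\ref{l:lemmaneginf} explicitly. Finally, for the converse direction I would start from an arbitrary pair related to $\mathcal{H}$, use Lemma~\ref{first-lemma} and the forward half of Theorem~\ref{t:main} to extract the union-closed families $\mathcal{K}_1,\mathcal{K}_2$, read off the side containing $F_{II}$, and set $F_I^i=F_I\cap H_i$, recovering exactly the data of the construction.
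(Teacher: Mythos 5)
Your proposal is correct and follows essentially the same route as the paper, whose proof of Theorem~\ref{t:main2} is simply a citation of Theorem~\ref{t:main}, Lemma~\ref{l:lemmaneginf} and Lemma~\ref{l:lemmaneginf2}; you have merely made the bootstrapping explicit. Your one genuine addition is the observation that Lemma~\ref{l:lemmaneginf} as stated only swaps the existing convex pieces $F_I\cap G_1$ and $F_I\cap G_2$ rather than installing an arbitrary prescribed convex partition $F_I^1,F_I^2$, and your fix --- rerunning the lemma's three-case argument, which uses only convexity of the incoming piece, the vanishing of its $I\cup J\cup L$ coordinates, and the fact that $G_i\setminus F_I$ is a union of cones determined by those coordinates --- is valid and worth recording, since the paper glosses over this point.
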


\begin{proof} The result is a consequence of Theorem~\ref{t:main}, Lemma~\ref{l:lemmaneginf} and Lemma~\ref{l:lemmaneginf2}.
\end{proof}

\begin{corollary} Let $H\subseteq \R^n_{\max}$ be a proper hemispace. Then $H$ is a disjoint union of a finite set of cones.
\end{corollary}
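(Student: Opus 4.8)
The plan is to prove, by induction on the ambient dimension $n$, the slightly stronger statement that \emph{every} hemispace of $\R^n_{\max}$ (proper or not) is a finite disjoint union of cones; the corollary is then the proper case. The inductive step rests on Theorem~\ref{t:main2}: all pieces of the decomposition produced there are cones except for the trace of $H$ on the face of type~I, and that single exceptional piece will be handled by the induction hypothesis after identifying it as a hemispace in a semimodule of strictly smaller dimension.

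First I would invoke Briec--Horvath~\cite{BH}. Since $H$ is proper, exactly one of $H,\complement H$ contains $-\infty$, and its closure is a closed halfspace with a bounding hyperplane $\mathcal{H}$, which we may put in the normal form~\eqref{e:-boundaryhalfs1}. Thus $(H,\complement H)$ is a complementary pair related to $\mathcal{H}$, and the conical decomposition of $\R^n_{\max}$ into the $k$-faces of $\mathcal{H}$ together with the (at most two) extra faces of type~I and type~II is available by Lemma~\ref{l:somlem34}. By Lemma~\ref{first-lemma} every $k$-face and every face of type~II lies entirely in $H$ or entirely in $\complement H$, so intersecting the disjoint decomposition $\R^n_{\max}=\bigsqcup_F F$ with $H$ exhibits $H$ as the disjoint union of the $k$-faces and the possible type-II face it contains, together with the single set $H\cap F_I$, where $F_I$ is the face of type~I. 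This is precisely the description furnished by Theorem~\ref{t:main2}. Each $k$-face and the type-II face is a cone by the remark following Lemma~\ref{l:somlem34}, and there are finitely many of them, so the only remaining task is to decompose $H\cap F_I$ into finitely many cones.

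The key observation for this last piece is that $F_I=\{x\in\R^n_{\max}\mid x_i=-\infty,\ i\in I\cup J\cup L\}$, given by~\eqref{eq:face-def2}, is a sub-semimodule isomorphic to $\R^m_{\max}$ with $m=n-|I\cup J\cup L|$; since $I\neq\emptyset$ we have $m\le n-1<n$. Moreover $F_I$ is convex, and the complement of $H\cap F_I$ inside $F_I$ is $\complement H\cap F_I$, which is again convex, so $H\cap F_I$ is a hemispace of $F_I\cong\R^m_{\max}$ (this is exactly the convex piece $F_I^1$ or $F_I^2$ of the partition in Theorem~\ref{t:main2}, cf.\ Lemma~\ref{l:lemmaneginf}). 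By the inductive hypothesis it is a finite disjoint union of cones of $\R^m_{\max}$, and under the identification $F_I\cong\R^m_{\max}$ each such cone, padded with $-\infty$ in the coordinates of $I\cup J\cup L$, stays closed under $\oplus$ and under multiplication by scalars different from $-\infty$, hence is a cone of $\R^n_{\max}$. These cones all sit inside $F_I$ and are therefore disjoint from the $k$-faces and the type-II face by Lemma~\ref{l:somlem34}(2), so combining them with the finitely many face-cones from the previous paragraph exhibits $H$ as a finite disjoint union of cones. The base case $n=0$, as well as the degenerate possibilities $H\cap F_I\in\{\emptyset,F_I\}$, are immediate.

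The step I expect to be the main obstacle is the treatment of the type-I face, since it is the one face that Theorem~\ref{t:main2} allows to be split between $H$ and $\complement H$ and so is not itself a cone contained in $H$. Recognizing its trace $H\cap F_I$ as a lower-dimensional hemispace, and verifying both that cones of the sub-semimodule $F_I$ genuinely lift to cones of $\R^n_{\max}$ and that the strict drop $m<n$ forces the induction to terminate, is the crux; everything else is routine bookkeeping built on Theorem~\ref{t:main2} and Lemma~\ref{first-lemma}.
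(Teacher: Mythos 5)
Your proposal is correct and follows essentially the same route as the paper: decompose $H$ via Theorem~\ref{t:main2} into $k$-faces, a possible type-II face (all cones), and the trace on the type-I face, then identify that trace as a hemispace of a sub-semimodule isomorphic to $\R^m_{\max}$ with $m<n$ and induct. The paper's own proof is just a terser version of this argument; your added checks (that $H\cap F_I$ is indeed a hemispace of $F_I$, that its cones lift to cones of $\R^n_{\max}$, and that the induction must cover the improper cases) are exactly the details the paper leaves implicit.
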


\begin{proof} Let $\mathcal{H}\subseteq \R^n_{\max}$ be a hyperplane supporting $H$. If there is no face of type I associated to $\mathcal{H}$, then it follows from Theorem~\ref{t:main2} that $H$ is a disjoint union of faces. As each face is a cone, the corollary follows. If there is a face of type I, then observe first that a face of type I is a semimodule and, moreover, it is isomorphic to a semimodule $R^d_{\max}, d<n.$ Then use again Theorem~\ref{t:main2} and mathematical induction.
\end{proof}

\section{Counting the hemispaces with a fixed finite center}\label{s:counting}

Our main result allows for an explicit counting and enumeration of hemispaces of certain type.
\begin{theorem}\label{t:counting773}
In $\R^n_{\max}$ there are exactly $2f(n)$ hemispaces (including the whole space and the empty set) related to a strictly affine nondegenerate hyperplane $\mathcal{H}$ centered in the origin, where $f(0) = 1$ and $f(n), n\ge 1,$ satisfies the recurrence formula:
\begin{equation}\label{eq:count-hyp}
f(n) = \binom{n+1}{1}f(n-1) + \binom{n+1}{2}f(n-2) + \cdots + \binom{n+1}{n}f(0)+1.
\end{equation}
\end{theorem}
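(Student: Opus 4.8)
The plan is to convert the geometric count into a combinatorial count of set partitions, using Theorem~\ref{t:main} as the bridge, and then to recognize the resulting recurrence as the ordered Bell recurrence obtained by ``peeling off the top block.''

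First I would record what the hypotheses buy us. Since $\mathcal{H}$ is strictly affine ($\alpha=0$), nondegenerate ($L=\emptyset$), and contains all variables together with the free term, there are no faces of type I or type II, and by the Remark following Lemma~\ref{l:somlem34} the $k$-faces are indexed exactly by $\mathcal{P}([n+1])\setminus\{\emptyset\}$. The crucial point is that the face with index set $K$ is the \emph{same} cone of $\R^n_{\max}$ for every such hyperplane: inspecting~\eqref{eq:face-def}, with $x_{n+1}=0$ and $I\cup\bar J=[n+1]$, the face depends only on $K$ and not on the split $[n]=I\sqcup J$, which merely records which faces are pure. Hence every hemispace $H$ occurring in the theorem is a union of faces drawn from one fixed conical decomposition of $\R^n_{\max}$ into $2^{n+1}-1$ cones, and by Lemma~\ref{first-lemma} it is determined by the family $\mathcal{K}_H\subseteq\mathcal{P}([n+1])\setminus\{\emptyset\}$ of index sets of the faces contained in it.

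The key reduction is then the bijection
\[
H \longmapsto (\mathcal{K}_H,\ \overline{\mathcal{K}_H})
\]
(with $\overline{\mathcal{K}_H}$ the complementary family) between the hemispaces counted in the theorem, including $\emptyset$ and $\R^n_{\max}$, and the ordered pairs $(\mathcal{K}_1,\mathcal{K}_2)$ partitioning $\mathcal{P}([n+1])\setminus\{\emptyset\}$ into two families each closed under union. The forward direction is the first part of Theorem~\ref{t:main}. For surjectivity I would take any such partition, set $A=\{i:\{i\}\in\mathcal{K}_2\}$ and $B=\{i:\{i\}\in\mathcal{K}_1\}$; closure under union forces $\mathcal{P}(A)\setminus\{\emptyset\}\subseteq\mathcal{K}_2$ and $\mathcal{P}(B)\setminus\{\emptyset\}\subseteq\mathcal{K}_1$, so when both $A,B$ are nonempty exactly one of them contains $n+1$ and serves as $\bar J$ while the other serves as $I$; the converse part of Theorem~\ref{t:main} then yields a genuine hemispace related to $\mathcal{H}_{I,\bar J}$. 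The degenerate partitions $\mathcal{K}_2=\emptyset$ and $\mathcal{K}_1=\emptyset$ correspond to $\R^n_{\max}$ and $\emptyset$. Injectivity is clear, since $H=\bigcup_{K\in\mathcal{K}_H}F_K$ and, for a proper hemispace, the bounding hyperplane is unique. Writing $T(s)$ for the number of ordered union-closed partitions of $\mathcal{P}([s])\setminus\{\emptyset\}$, the theorem reduces to showing $T(n+1)=2f(n)$ with $f$ as in~\eqref{eq:count-hyp}.

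Finally I would count $T(s)$ by peeling. Complementation gives $T(s)=2U(s)$, where $U(s)$ counts the partitions with $[s]\in\mathcal{K}_1$. Given one, put $M=\bigcup_{X\in\mathcal{K}_2}X$; union-closure gives $M\in\mathcal{K}_2$, whence $M\subsetneq[s]$, so the block $B_1:=[s]\setminus M$ is nonempty, every set meeting $B_1$ is forced into $\mathcal{K}_1$, and the sets inside $M$ form an arbitrary union-closed partition of $\mathcal{P}(M)\setminus\{\emptyset\}$ whose top set $M$ lies in $\mathcal{K}_2$. Choosing $B_1$ of size $k$ in $\binom{s}{k}$ ways and counting the remainder by the same symmetry yields
\[
U(s)=\sum_{k=1}^{s-1}\binom{s}{k}U(s-k)+1,\qquad U(1)=1,
\]
the $+1$ being the term $k=s$ (all of $\mathcal{P}([s])\setminus\{\emptyset\}$ in $\mathcal{K}_1$). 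Setting $s=n+1$ and $f(n):=U(n+1)$ gives precisely~\eqref{eq:count-hyp} with $f(0)=U(1)=1$, and $T(n+1)=2f(n)$ hemispaces; this is the standard ordered Bell recurrence, so $f(n)$ is the relevant ordered Bell number. I expect the main obstacle to be the careful bookkeeping in the bijection: verifying that the conical decomposition is genuinely split-independent, so that ``related to a strictly affine nondegenerate hyperplane centered in the origin'' translates into the clean, split-free condition that both index families be closed under union, and then correctly absorbing the two trivial hemispaces and the global factor of two.
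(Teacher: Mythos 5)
Your proposal is correct and follows essentially the same route as the paper: both reduce, via Theorem~\ref{t:main}, to counting ordered partitions of $\mathcal{P}([n+1])\setminus\{\emptyset\}$ into two union-closed families, and both obtain the recurrence by peeling off the complement of the maximal index set of the smaller family ($\binom{n+1}{k}$ choices of that complement, then recursion on the power set of the remaining $n+1-k$ elements). Your bookkeeping for the bijection --- the observation that the conical decomposition depends only on $[n+1]$ and not on the split $I\sqcup\bar J$, and the recovery of the hyperplane from the assignment of the singletons --- is more explicit than the paper's, but the underlying argument is the same.
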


\begin{proof} As $\mathcal{H}$ is a strictly affine nondegenerate hyperplane in $\R^n_{\max}$, we need to work with the variables $x_1, x_2,\dots,x_{n+1}$. All faces that appear are $k$-faces. Theorem~\ref{t:main} reduces the counting of proper hemispaces to a combinatorial problem. Let $\mathcal{P}(n+1)$ be the collection of subsets of a set with $n+1$ elements. We need to determine the number of partitions of $\mathcal{P}(n+1)$ in two nonempty collections of subsets that are closed under union. We prove by induction on $n$ that $f(n)$ gives the number of hemispaces that contain the origin. To count their complements as well, we multiply by 2.

We check the induction hypothesis for $n=1$. The hemispaces in $\R_{\max}$ supported by a nondegenerate strictly affine line centered in the origin that, in addition, contain the origin are the closed positive and negative half-lines, and the whole line, so clearly $f(1) = \binom{1+1}{1}f(0)+1=3$, as predicted by \eqref{eq:count-hyp}.

Let $(H, \complement H)$ be a pair of complementary hemispaces in $\R^n_{\max}$ related to $\mathcal{H}$.
Assume that $H$ contains the origin.  It follows that at most one $(n-1)$-codimensional face in $\mathcal{H}$, which has a set of indices of cardinality $n$, can be in $\complement H$. Indeed, having two $(n-1)$-codimensional faces in $\complement H$ would also require $\complement H$ to contain the origin due to Theorem~\ref{t:main}. There are $\binom{n+1}{1}$ possible $(n-1)$-codimensional faces in $\mathcal{P}(n+1)$. Once an $(n-1)$-codimensional face $F$ is chosen to be in $\complement H$, every face $\tilde F$ that does not contain in the set of indices $K_{\tilde F}$ the variable not maximal in $F$, say $x_{i_0}$, must be in $H$. Indeed, having such a face included in $\complement H$ forces $0\in \complement H$ due to Theorem~\ref{t:main}, in contradiction with the assumption above.

It remains to partition the sets of indices which do not contain $x_{i_0}$. This is an equivalent partition problem in $\R^{n-1}_{\max}$. There are $\binom{n+1}{1}$ ways to reduce the problem to a $(n-1)$-dimensional problem. As the number of hemispaces in $\R^{n-1}_{\max}$ is $f(n-1)$,  this accounts for the $\binom{n+1}{1}f(n-1)$ term in formula~\eqref{eq:count-hyp}.

Alternatively, $H$ could contain all $(n-1)$-codimensional faces.  In this case, there is no more than one $(n-2)$-codimensional face in $\complement H$ since, by Theorem~\ref{t:main}, having two distinct $(n-2)$-codimensional faces in $\complement H$ would also require that either the origin or an $(n-1)$-codimensional face to be in $\complement H$ due to Theorem~\ref{t:main}.  Thus, there are $\binom{n+1}{2}$ $(n-2)$-codimensional faces that can be chosen to be in $\complement H$, and doing so reduces the problem to $\R^{n-2}_{\max}$.  This accounts for the $\binom{n+1}{2}f(n-2)$ term in formula~\eqref{eq:count-hyp}.  Again, we could also choose for every $(n-2)$-codimensional face to be in $H$.  Following this pattern for $1\le d\le n-1$, it is clear that there are $\binom{n+1}{d}$ ways to reduce the problem to the case of $\R^{n-d}_{\max}$, which accounts for all the terms in formula~\eqref{eq:count-hyp}, except the last term $+1$ which represents the whole space $\R^n_{\max}$.
\end{proof}

\begin{remark} The first six terms in the sequence $f(n)$ are: 1, 3, 13, 75, 541, 4283. The sequence is listed in the On-line Encyclopedia of Integer Sequences~\cite{OEIS} as A000670 and has many well known combinatorial interpretations. Among others, $f(n)$ counts the number of ways $n$ competitors can rank in a competition, allowing for the possibility of ties, or, equivalently, the number of weak orders on a set with $n$ elements. The terms of the sequence $f(n)$ are usually called \emph{ordered Bell numbers} or \emph{Fubini numbers}. The sequence of ordered Bell numbers have a growth rate much higher than $n!$. Ordered Bell numbers also count permutohedron faces, Cayley trees, Cayley permutations, ordered multiplicative partitions of square free numbers, and equivalent formulae in Fubini's theorem. Our result shows a bijective correspondence between these combinatorial objects and max-plus hemispaces. A reference mentioning the formula appearing in Theorem~\ref{t:counting773} is the paper of Gross~\cite{gross}.
\end{remark}

A corollary of Theorem~\ref{t:counting773} is the following combinatorial result, which we did not find in the literature.

\begin{corollary} The number of ways to split $\mathcal{P}(n),$ the collection of subsets of a set with $n$ elements, into two subcollections closed under the union, not considering their order, is given by the ordered Bell number $f(n)$.
\end{corollary}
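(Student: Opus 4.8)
The plan is to read the corollary off the combinatorial content already developed in the proof of Theorem~\ref{t:counting773}, rather than setting up a new argument. First I would recall that for a strictly affine nondegenerate hyperplane $\mathcal{H}$ centered at the origin there are no faces of type I or type II, so by Theorem~\ref{t:main2} every hemispace related to $\mathcal{H}$ is exactly a union of $k$-faces. Combined with Theorem~\ref{t:main}, this means a hemispace $H$ is completely encoded by the collection $\mathcal{K}_H$ of sets of indices of the faces it contains, and $H\mapsto\mathcal{K}_H$ is a bijection between hemispaces related to $\mathcal{H}$ and those subcollections of the face-index collection for which both the subcollection and its complement are closed under union. This is precisely the data of a splitting of the face-index collection into two subcollections closed under union.

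Second, I would pass from hemispaces to unordered bipartitions by complementation. If $H$ corresponds to $\mathcal{K}_H$, then $\complement H$ corresponds to the complementary subcollection, so the unordered pair $\{H,\complement H\}$ is the same data as the unordered bipartition $\{\mathcal{K}_H,\mathcal{K}_{\complement H}\}$. Since $H\cap\complement H=\emptyset$ while $H\cup\complement H=\R^n_{\max}$, no hemispace equals its own complement; hence the $2f(n)$ hemispaces counted by Theorem~\ref{t:counting773} group into exactly $f(n)$ complementary pairs, and therefore into exactly $f(n)$ unordered splittings of the face-index collection into two union-closed subcollections. Identifying the face-index collection with the power set appearing in the statement then delivers the count $f(n)$.

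The step I expect to require the most care is the matching of ground sets and the treatment of the trivial block. The faces of $\mathcal{H}$ are indexed by the nonempty subsets of the variable set $\{x_1,\dots,x_{n+1}\}$, so the object being split is a power-set--type lattice, and one must verify that the union-closure condition transports correctly under the identification with $\mathcal{P}(n)$ and that the unique bipartition with an empty block (corresponding to the whole-space/empty-set pair) is counted exactly once. This bookkeeping is the only genuine obstacle; everything else is a transcription of the bijection already implicit in the proof of Theorem~\ref{t:counting773}.

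Finally, as an independent check that does not invoke the geometry, I would note that the same number can be produced purely set-theoretically by re-running the recurrence argument of Theorem~\ref{t:counting773} on the lattice of subsets: fix the block containing the maximal element, observe that at most one coatom (an index set of cardinality $n$) can lie in the other block, and classify the splittings by which coatom this is. Summing over the resulting reduced problems reproduces the recurrence~\eqref{eq:count-hyp}, confirming that the number of unordered union-closed bipartitions is $f(n)$.
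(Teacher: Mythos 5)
Your reduction of the count to Theorem~\ref{t:counting773} is sound as far as it goes: with $\alpha=0$ and $L=\emptyset$ there are no faces of type I or II, every hemispace related to $\mathcal{H}$ is a union of $k$-faces, complementation groups the $2f(n)$ hemispaces into $f(n)$ unordered pairs, and each pair is exactly an unordered splitting of the face-index collection into two union-closed subcollections. The problem is the step you defer as ``bookkeeping.'' The face-index collection of a strictly affine nondegenerate hyperplane in $\R^n_{\max}$ is the family of \emph{nonempty subsets of the $(n{+}1)$-element set} $\{1,\dots,n+1\}$ (the free term contributes the extra index $n+1$), which has $2^{n+1}-1$ members, whereas $\mathcal{P}(n)$ has $2^n$. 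These are not isomorphic join-semilattices, and their splitting counts genuinely differ: for the nonempty subsets of a $3$-element set the theorem gives $13$ unordered splittings, while a direct check on the nonempty subsets of a $2$-element set gives $3$ (or $6$ if the empty subset is retained, since $\emptyset$ may be placed in either block without affecting union-closure). So the identification you hope to ``verify'' does not exist: your argument proves the statement for $n+1$ ground elements, while the corollary asserts it for $n$. Your proposed recurrence check would expose this rather than confirm it — run on the subsets of an $n$-element set it produces coefficients $\binom{n}{k}$ and hence the value $f(n-1)$, not the coefficients $\binom{n+1}{k}$ of~\eqref{eq:count-hyp}.

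What is missing is the ingredient the paper actually supplies: an explicit bijection between the splittings of $\mathcal{P}(n)$ and the weak orders on $[n]$, built by layering — to a weak order one associates the union-closed subcollection generated by the subsets meeting the top layer, then the top two layers, and so on, and one checks that the complementary subcollection is union-closed as well. That bijection ties the combinatorial object of the corollary to ordered Bell numbers directly, independently of the dimension shift between the hyperplane's face lattice and $\mathcal{P}(n)$ (it is also where the indexing of $f$ must be confronted, since weak orders on $[n]$ are counted by $f(n-1)$ in the normalization of the recurrence~\eqref{eq:count-hyp}). If you want to keep your hemispace-based route, you must apply Theorem~\ref{t:counting773} in $\R^{n-1}_{\max}$, whose faces are indexed by the nonempty subsets of an $n$-element set, and then reconcile the resulting count with the statement; as written, your proof leaves both the mismatch of ground sets and the role of the empty subset unresolved.
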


\begin{proof} The statement follows from the proof of Theorem~\ref{t:counting773}. We only describe the bijective correspondence between the family of weak orders on $[n]$ and the splittings of $\mathcal{P}(n)$. To a given weak order on $[n]$ we will associate the subcollection $\mathcal{C}$ of $\mathcal{P}(n)$ that contains $[n]$. One can think at a weak order on $[n]$ as a pyramid containing all elements of $[n]$ arranged in $k$ layers. All elements in a layer are assumed to be equal and elements in a higher layer are strictly larger then the elements in a lower layer. At the first step, we include in $\mathcal{C}$ all subsets containing one element of the first layer and all possible unions of such sets. At the second step, we include in $\mathcal{C}$ all subsets containing one element from the first and one element from the second layer, and all possible unions of such sets. At the third step, we include in $\mathcal{C}$ all subsets containing one element of the first, one element of the second layer and one element of the third layer, and all possible unions of such sets. We continue like this until we reach the $k$-th layer and finally have $[n]$ included in $\mathcal{C}$. We observe that the construction of $\mathcal{C}$ guarantees that the subcollection $\mathcal{P}(n)\setminus \mathcal{C}$ is closed under the union as well.
\end{proof}

\begin{remark} In a related direction, ordered Bell numbers also appear when counting the types of semispaces in max-min (or fuzzy) algebra~\cite{NS-maxmin}.
\end{remark}

\section{Examples of hemispaces}\label{s:examples}

\begin{example} We list the hemispaces in $\R_{\max}^2$ related to a strictly affine hyperplane centered in the origin. We use the method provided by Theorem~\ref{t:main}. We work with 3 coordinates, 1, 2, 3 which are partitioned in two nonempty sets $I, \bar J$. There are 3 ways to partition into sets of cardinality 1 and 2. Assume that the partition is $\{\{1,2\},\{3\}\}$. The other two cases are similar. The equation of the hyperplane $\mathcal{H}$ is $x_1\oplus x_2=0$. Assume $I=\{1,2\}, \bar J=\{3\}$. Let $H_1, H_2$ be a pair of complementary hemispaces related to $\mathcal{H}$. We already have the faces $\{1\},\{2\},\{1,2\}$ in $H_2$ and the face $\{3\}$ in $H_1$. It remains to partition the faces $\{1,3\},\{2,3\},\{1,2,3\}$ such that the closure under the union required in Theorem~\ref{t:main} holds. This can be done in 4 ways:
\begin{enumerate}
\item $\{1,3\},\{2,3\},\{1,2,3\}$ are in $H_2$, thus overall
\begin{itemize}
\item  $\{3\}$ is in $H_1$;
\item  $\{1\},\{2\},\{1,2\},\{1,3\},\{2,3\},\{1,2,3\}$ are in $H_2$;
\end{itemize}
\item $\{2,3\},\{1,2,3\}$ are in $H_2$ and $\{1,3\}$ is in $H_1$, thus overall
\begin{itemize}
\item  $\{3\}, \{1,3\}$ are in $H_1$;
\item  $\{1\},\{2\},\{1,2\},\{2,3\},\{1,2,3\}$ are in $H_2$;
\end{itemize}
\item $\{1,3\},\{1,2,3\}$ are in $H_2$ and $\{2,3\}$ is in $H_1$, thus overall
\begin{itemize}
\item  $\{3\}, \{2,3\}$ are in $H_1$;
\item  $\{1\},\{2\},\{1,2\},\{1,3\},\{1,2,3\}$ are in $H_2$;
\end{itemize}
\item $\{1,3\},\{2,3\},\{1,2,3\}$ are in $H_1$, thus overall
\begin{itemize}
\item  $\{3\}, \{1,3\},\{2,3\},\{1,2,3\}$ are in $H_1$;
\item $\{1\},\{2\},\{1,2\}$ are in $H_2$.
\end{itemize}
\end{enumerate}
The list above gives 4 pairs of complementary hemispaces, hence 8 distinct proper hemispaces. Considering the other two cases for the partition $I,\bar J$, overall there are $8\times 3=24$ distinct proper hemispaces. If we add to these the empty set and $\R^2_{\max}$, which correspond to the partition $\emptyset$ in $H_1$ and $\mathcal{P}(3)\setminus \{\emptyset\}$ in $H_2$, we have 26 hemispaces, as predicted by Theorem~\ref{t:counting773}.
\end{example}

\begin{example} We briefly list the hemispaces in $\R_{\max}^3$ related to a strictly affine hyperplane centered in the origin. We use the method provided by the proof of Theorem~\ref{t:main}. We work with 4 coordinates, 1, 2, 3, 4. The elements of $\mathcal{P}(4)$ are partitioned between a pair of complementary hemispaces $H_1, H_2$. We assume that the origin, which has the set of indices $\{1,2,3,4\},$ belongs to $H_1$. We look now at the partition of the $2$-codimensional faces, which correspond to the sets of indices $\{1,2,3\},\{1,2,4\},\{1,3,4\},\{2,3,4\}$. As the origin belongs to $H_1$, at most one $2$-codimensional face can belong to $H_2$. We distinguish two cases:

\begin{enumerate}
\item all $2$-codimensional faces belong to $H_1$;
\item one $2$-codimensional face, say $\{1,2,3\}$, belongs to $H_2$, and the others belong to $H_1$.
\end{enumerate}

\emph{Case 1.} We partition now the $1$-codimensional faces, which are in number of $6$: $\{1,2\},\{1,3\},\{1,4\},\{2,3\},\{2,4\},\{3,4\}$. At most one of these can belong to $H_2$, because otherwise $H_2$ would include a $2$-codimensional face, in contradiction to our assumption. We distinguish two subcases:

\begin{enumerate}
\item all $1$-codimensional faces belong to $H_1$;
\item one $1$-codimensional face, say $\{1,2\},$ belongs to $H_2$, and the others belong to $H_1$.
\end{enumerate}

\emph{Subcase 1.} We partition now the $0$-codimensional faces, which are in number of $4$: $\{1\},\{2\},\{3\},\{4\}.$ At most one of these can belong to $H_2$, because otherwise $H_2$ would include a $1$-codimensional face, in contradiction to our assumption. We distinguish two subsubcases:

\begin{enumerate}
\item all $0$-codimensional faces belong to $H_1$;
\item one $0$-codimensional face, say $\{1\},$ belongs to $H_2$, and the others belong to $H_1$.
\end{enumerate}

There are $2\times 5=10$ distinct hemispaces in Subcase 1. Note that we multiply by 2 in order to include in the counting the complements of the hemispaces containing the origin.

\emph{Subcase 2.} As above, we assume that $\{1,2\},$ belongs to $H_2$, and the other $1$-codimensional faces belong to $H_1$. It remains to partition the $0$-codimensional faces, which are in number of $4$: $\{1\},\{2\},\{3\},\{4\}.$ The faces $\{3\},\{4\}$ should be assigned to $H_1$, because otherwise $H_2$ contains a $2$-codimensional face, in contradiction to our assumption. Also, $\{1\},\{2\}$ cannot be both in $H_1$, because this would imply that $\{1,2\},$ belongs to $H_1$, again in contradiction to our assumptions. We are left with two subsubcases:

\begin{enumerate}
\item $\{1\},\{2\}$ belong to $H_2$;
\item only one of $\{1\},\{2\}$ belongs to $H_2$, and the other belongs to $H_1$.
\end{enumerate}

There are $6\times 2\times 3=36$ distinct hemispaces in Subcase 2.

Overall, there are $10+36=46$ distinct hemispaces in Case 1.

\emph{Case 2.} We partition now the $1$-codimensional faces, which are in number of $6$: $\{1,2\},\{1,3\},\{1,4\},\{2,3\},\{2,4\},\{3,4\}$. All faces containing the variable $4$ are in $H_1$, because otherwise $H_2$ contains the origin, in contradiction with our assumption. We cannot have two of the remaining $1$-codimensional faces in $H_1$ because this implies that $\{1,2,3\}$ is in $H_1$, again in contradiction to our assumption. We are left with two subcases:

\begin{enumerate}
\item $\{1,2\},\{1,3\},\{2,3\}$ belong to $H_2$;
\item one of $\{1,2\},\{1,3\},\{2,3\}$ belongs to $H_1$.
\end{enumerate}

\emph{Subcase 1.} It remains to partition the $0$-codimensional faces. We observe first that $\{4\}$ belongs to $H_1$, because otherwise the origin would be in $H_2$, in contradiction with our assumption. Also, it is not possible for two of the remaining variables to be in $H_1$, because this implies that one of the $1$-codimensional faces $\{1,2\},\{1,3\},\{2,3\}$ belongs to $H_1$, in contradiction with our assumption. We are left with two subsubcases:

\begin{enumerate}
\item exactly one of $\{1\},\{2\},\{3\}$ belongs to $H_1$;
\item $\{1\},\{2\},\{3\}$ belong to $H_2$.
\end{enumerate}

There are $4\times 4\times 2=32$ distinct hemispaces in Subcase 1.

\emph{Subcase 2.} In order to fix the notation, assume that $\{1,2\}$ belongs to $H_1$. Observe first that the faces $\{3\},\{4\}$ belong to $H_2$, as otherwise one of the faces $\{1,2,3\}$ or $\{1,2,4\}$ belongs to $H_1$, in contradiction to our assumption. Also, we cannot have both $\{1\},\{2\}$ in $H_2$, as this implies that $\{1,2\}$ belongs to $H_2$, in contradiction with our assumption. We are left with two subcases:

\begin{enumerate}
\item exactly one of $\{1\},\{2\}$ belongs to $H_2$;
\item $\{1\},\{2\}$ belong to $H_1$.
\end{enumerate}

There are $4\times 3\times 3\times 2=72$ distinct hemispaces in Subcase 2.

Overall there are $32+72=104$ distinct hemispaces in Case 2.

Adding the number of cases that appear in Case 1 and Case 2 we have 150 distinct hemispaces, as predicted by Theorem~\ref{t:counting773}.
\end{example}

\begin{example} We show a pair of complementary hemispaces related to a degenerate hyperplane. Consider the degenerate hyperplane in $\R^4_{\max}$ with equation $x_1=x_2, x_3=-\infty$. The $k$-faces are indexed by $\{1\},\{2\},\{1,2\}$, and denoted by $F_{\{1\}}, F_{\{2\}}, F_{\{1,2\}}$. We have an extra face of type I given by $F_I=\{x\in \R^4_{\max}\vert x_1=x_2=x_3=-\infty\}$ and an extra face of type II given by $F_{II}=\{x\in \R^4_{\max}\vert x_3>-\infty\}$. We may split the face of type I in two convex parts, say
\begin{equation*}
\begin{aligned}
F_I^1&=\{x\in \R^4_{\max}\vert x_1=x_2=x_3=-\infty, x_4\ge 0\},\\
F_I^2&=\{x\in \R^4_{\max}\vert x_1=x_2=x_3=-\infty, x_4< 0\}.
\end{aligned}
\end{equation*}

It follows from Theorem~\ref{t:main2} that the following assignment of the faces gives a pair $(H_1,H_2)$ of complementary hemispaces related to the hyperplane $\mathcal{H}$:
\begin{equation*}
\begin{aligned}
H_1=&\{F_{\{1\}}, F^1_I, F_{II}\},\\
H_2=&\{F_{\{2\}}, F_{\{1,2\}}, F^1_I\}.
\end{aligned}
\end{equation*}
\end{example}

\section*{Acknowledgement}

This paper was written during the Summer 2013 REU program at Pennsylvania State University, supported by the NSF grant  DMS-0943603. D. Ehrmann, Z. Higgins are undergraduate students. V. Nitica was one of the faculty coordinators. He was supported by Simons Foundation Grant 208729.

\section*{References}

\end{document}